\newtheorem{teo}{Teorema}[section]
\newtheorem{prop}[teo]{Proposition}
\newtheorem{lema}[teo]{Lemma}
\newtheorem{cor}[teo]{Corollary}
\theoremstyle{definition}
\newtheorem{defn}[teo]{Definition}
\newtheorem{obs}[teo]{Remark}
\newtheorem{ex}[teo]{Example}
\newcommand{\nd}{\noindent}
\begin{document}
	\allowdisplaybreaks

\title{Partial Coactions of Weak Hopf Algebras on Coalgebras}

\author[Fonseca]{Graziela Fonseca}
\address[Fonseca]{Instituto Federal Sul-Riograndense, Brazil} 
\email{grazielafonseca@charqueadas.ifsul.ebu.br}

\author[Fontes]{Eneilson Fontes}
\address[Fontes]{Universidade Federal do Rio Grande, Brazil}
\email{eneilsonfontes@furg.br}

\author[Martini]{Grasiela Martini}
\address[Martini]{Universidade Federal do Rio Grande, Brazil}
\email{grasiela.martini@furg.br}

\begin{abstract}
It will be seen that if $H$ is a weak Hopf algebra in the definition of coaction of weak bialgebras on coalgebras \cite{Wang}, then a definition property is suppressed giving rise to the (global) coactions of weak Hopf algebras on coalgebras. The next step will be introduce the more general notion of partial coactions of weak Hopf algebras on coalgebras as well as a family of examples via a fixed element on the weak Hopf algebra, illustrating both definitions: global and partial. Moreover, it will also be presented how to obtain a partial comodule coalgebra from a global one via projections, giving another way to find examples of partial coactions of weak Hopf algebras on coalgebras. In addition, the weak smash coproduct \cite{Wang} will be studied and it will be seen under what conditions it is possible to generate a weak Hopf algebra structure from the coproduct and the counit defined on it. Finally, a dual relationship between the structures of partial action and partial coaction of a weak Hopf algebra on a coalgebra will be established.
\end{abstract}

\thanks{The first author was partially supported by CNPq, Brazil}

\maketitle

{\nd\scriptsize{\bf Key words:} Weak Hopf algebra, globalization, dualization, partial comodule coalgebra, weak smash coproduct.}	\\
{\nd\scriptsize{\bf Mathematics Subject Classification:} primary 16T99; secondary 20L05}

\section{Introduction}

Partial action theory appeared firstly in \cite{Exelp} in the context of operator algebra. Later, in \cite{Dokuchaev}, M. Dokuchaev and R. Exel brought partial actions to a purely algebraic context contributing to the development of classical results, such as Galois theory, in the case of partial actions of groups on rings.

Following this line of research, S. Caenepeel and K. Janssen introduced the notions of partial actions and coactions of Hopf algebras on algebras in \cite{CaenJanssen}. The main idea of studying partial actions for the context of Hopf algebras is to generalize the results obtained for partial group actions to this broader context. The notions of partial actions and coactions of Hopf algebras on coalgebras appeared for the first time in \cite{Glauberglobalizations}, dualizing the structures introduced in \cite{CaenJanssen}.

As a natural task, in \cite{Felipeweak}, was introduced the notion of partial actions of weak Hopf algebras on algebras. In this work, the authors extended many results of the classic theory for this setting.

We introduced in \cite{EGG} the theory of partial actions of weak Hopf algebras on coalgebras, inspired by the notion of partial action of a Hopf algebra on a coalgebra, presented in \cite{Glauberglobalizations}. Basically, it was constructed in \cite{EGG} a correspondence between a partial action of a groupoid $\mathcal{G}$ on a coalgebra $C$ and a partial action of the groupoid algebra $\Bbbk \mathcal{G}$ on the coalgebra $C$. 

In the present work, we give successions to the theory of partial actions. The notion of partial and global coactions of weak Hopf algebras on coalgebras is introduced as well as some important properties and examples. In the sequel, we will study the weak smash coproduct presented in \cite{Wang} in order to see under what conditions this structure is a weak Hopf algebra. We divide this paper as follows:

The second section is devoted to the study of weak Hopf algebras, their properties and some examples that will be commonly used throughout the text. A weak bialgebra is a vector space that has a structure of algebra and coalgebra simultaneously, with a compatibility property between these structures. The axioms of weak bialgebra appear for the first time in \cite{Bohm}. If a weak bialgebra is provided with an anti-homomorphism of algebras and coalgebras, them we say that is a weak Hopf algebra. The main difference between a weak Hopf algebra and a Hopf algebra is that in the case of a Hopf algebra the counit is an algebra homomorphism.

The concept of coaction of a weak bialgebra on a coalgebra was introduced in \cite{Wang}. In section 3, the coaction of a weak Hopf algebra on a coalgebra is presented. Generalizing this concept, the definition of partial coaction of a weak Hopf algebra on a coalgebra is exhibited with its properties and a family of examples. It is also ascertained what conditions are necessary and sufficient for a partial comodule coalgebra to be generated from a global comodule coalgebra via a projection. 

Section 4 is intended to investigate the weak smash coproduct presented in \cite{Wang}. The idea is to construct a weak Hopf algebra from the existing coalgebra structure in the weak smash coproduct. Historically, the construction of Hopf algebras and weak Hopf algebras from global and partial (co)actions has been studied by several authors. This can be seen in texts such as \cite{NASN}, \cite{Majid} and \cite{Takeuchi}. This shows a great concern in presenting new examples of such structures. Our contribution is to make the weak smash coproduct into a weak Hopf algebra under certain conditions.

%No Capítulo 4 deste trabalho há três seções. A primeira seção trata de dualizações, isto é, são averiguadas as correspondências duais entre as estruturas de módulo álgebra parcial, módulo coálgebra parcial, comódulo álgebra parcial e comódulo coálgebra parcial. Já na segunda e na terceira seção são investigadas as condições necessárias para que um módulo coálgebra parcial e um comódulo coálgebra parcial tenham uma globalização, estendendo a ideia apresentada em \cite{Glauberglobalizations} por F. Castro e G. Quadros.

From now, some notations will be fixed. It will be denoted by $\Bbbk$ a generic field, unless some additional specification is made about such structure. Moreover, every tensorial product will be considered over the field $\Bbbk$, then, it will be used the notation $\otimes$ instead of $\otimes_{\Bbbk}$. $A$ will always denote an algebra, $C$ a coalgebra and $H$ a weak Hopf algebra. Throughout the text other properties may be required over the structures $A$, $C$ e $H$, but they will be duly mentioned. Besides that, every map will be considered $\Bbbk$-linear and the vector spaces will be considered over the field $\Bbbk$. Finally, the isomorphism $V \otimes \Bbbk \simeq V \simeq \Bbbk \otimes V$ will be used automatically for every vector space $V$.

\section{Preliminaries}
In this section, we present few results of weak Hopf algebras. For more details we refer \cite{Bohminicio}, \cite{Bohmexemplo} and \cite{Bohm}.

A \textit{weak bialgebra} $(H, m, u, \Delta, \varepsilon)$ (or simply $H$) is a vector space such that $ (H, m, u) $ is an algebra, $(H,\Delta,\varepsilon)$ is a coalgebra, and, in addition, the following conditions are satisfied for all $h,k \in H$:
\begin{enumerate}
	\item[(i)] $\Delta(hk) = \Delta(h)\Delta(k)$;
	\item[(ii)] $\varepsilon(hk\ell) = \varepsilon(hk_1)\varepsilon(k_2\ell) = \varepsilon(hk_2)\varepsilon(k_1\ell)$;
	\item[(iii)] $(1_H \otimes \Delta(1_H))(\Delta(1_H) \otimes 1_H) = (\Delta(1_H) \otimes 1_H)(1_H \otimes \Delta(1_H)) = \Delta^2(1_H)$.
\end{enumerate}

Since $\Delta$ is multiplicative, we conclude that $\Delta(h) = \Delta (h1_H) = \Delta(1_Hh)$, then
\begin{eqnarray}
h_1 \otimes h_2 = h_11_1 \otimes h_21_2 
= 1_1h_1 \otimes 1_2h_2. \label{4.1}
\end{eqnarray}
It is possible to use $ \varepsilon $ to define the following linear maps
\begin{eqnarray*}
	\varepsilon_t: H &\rightarrow& H \ \ \ \ \ \ \  \ \ \ \ \mbox{ and} \ \  \ \ 	\varepsilon_s: H \rightarrow H \\
	h &\mapsto& \varepsilon(1_1h)1_2 \ \ \ \ \ \ \ \ \ \ \ \  \ \ \ \  \ \ \	h \mapsto 1_1\varepsilon(h1_2) .
\end{eqnarray*}
Then, it can be defined the vector spaces $H_t= \varepsilon_t(H)$ and $H_s= \varepsilon_s(H)$. Thus, for any weak bialgebra $H$, every element $ h \in H $ can be written as
\begin{eqnarray}
h = (\varepsilon \otimes I) \Delta(h) 
= (\varepsilon \otimes I) \Delta(1_H h) 
= \varepsilon_t(h_1)h_2,\label{propt}\\
h = (I \otimes \varepsilon) \Delta(h) 
= (I \otimes \varepsilon) \Delta(h1_H) 
= h_1 \varepsilon_s(h_2). \label{props}
\end{eqnarray}

\begin{prop}
Let $H$ be a weak bialgebra. Then, the following properties hold for all $h, k \in H$
\begin{eqnarray}
\varepsilon_t(\varepsilon_t(h)) &=& \varepsilon_t(h)  \label{4.3}\\
\varepsilon_s(\varepsilon_s(h)) &=& \varepsilon_s(h)  \label{4.4} \\
\varepsilon(h\varepsilon_t(k)) &=& \varepsilon(hk)  \label{4.5} \\
\varepsilon(\varepsilon_s(h)k) &=& \varepsilon(hk)  \label{4.6} \\
\Delta(1_H) &\in& H_s \otimes H_t \label{4.7} \\
\varepsilon_t(h\varepsilon_t(k)) &=& \varepsilon_t(hk)  \label{4.8}\\
\varepsilon_s(\varepsilon_s(h)k) &=& \varepsilon_s(hk)  \label{4.9} \\
\Delta(h) &=& 1_1h \otimes 1_2 \mbox{ for all $h \in H_t$}  \label{4.10}\\
\Delta(h) &=& 1_1 \otimes h1_2  \mbox{ for all $h \in H_s$}\label{4.11}\\
h_1 \otimes \varepsilon_t(h_2) &=& 1_1h \otimes 1_2  \label{4.12}\\
\varepsilon_s(h_1) \otimes h_2 &=& 1_1 \otimes h1_2 \label{4.13} \\
h\varepsilon_t(k) &=& \varepsilon(h_1k)h_2  \label{4.14} \\
\varepsilon_s(h)k &=& k_1\varepsilon(hk_2).  \label{4.15}
\end{eqnarray}
\end{prop}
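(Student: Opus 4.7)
The strategy is to derive the thirteen identities in a cascade. The primary tool is the consequence of axiom (ii) obtained by setting the middle argument to $1_H$:
\begin{equation*}
\varepsilon(h\ell) = \varepsilon(h1_1)\varepsilon(1_2\ell) = \varepsilon(h1_2)\varepsilon(1_1\ell),
\end{equation*}
together with the compatibility $\Delta(h)=\Delta(1_H)\Delta(h)=\Delta(h)\Delta(1_H)$ (from axiom (i) applied to $h=1_H h=h\,1_H$) and axiom (iii) governing $\Delta^2(1_H)$.

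First, I would establish \eqref{4.5} and \eqref{4.6} by expanding the definitions of $\varepsilon_t$ and $\varepsilon_s$ and collapsing via the displayed identity above. The idempotencies \eqref{4.3} and \eqref{4.4} then follow immediately by applying \eqref{4.5}/\eqref{4.6} inside the nested expressions $\varepsilon_t(\varepsilon_t(h))=\varepsilon(1_1\varepsilon_t(h))1_2$ and $\varepsilon_s(\varepsilon_s(h))=1_1\varepsilon(\varepsilon_s(h)1_2)$. The same mechanism, with one additional application of \eqref{4.5}/\eqref{4.6}, yields \eqref{4.8} and \eqref{4.9}.

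Next, for \eqref{4.14} and \eqref{4.15}, I would start from the right-hand side $\varepsilon(h_1 k)h_2$, use \eqref{4.5} to rewrite $\varepsilon(h_1 k)=\varepsilon(h_1\varepsilon_t(k))$, factor out the scalar $\varepsilon(1_1 k)$, and manipulate the remaining $\varepsilon(h_1 1_2)h_2$ via $\Delta(h)=\Delta(h)\Delta(1_H)$; the identity $h\varepsilon_t(k)=\varepsilon(1_1 k)h1_2$ emerges. The relations \eqref{4.12} and \eqref{4.13} then follow by applying $I\otimes\varepsilon_t$ or $\varepsilon_s\otimes I$ to $\Delta(h)$ and invoking \eqref{4.14}/\eqref{4.15}. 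Specializing \eqref{4.12} and \eqref{4.13} to $h=1_H$, combined with the idempotencies \eqref{4.3}/\eqref{4.4}, gives \eqref{4.7}. Finally, for \eqref{4.10} and \eqref{4.11}, I would write $h\in H_t$ as $h=\varepsilon_t(x)=\varepsilon(1_1 x)1_2$ (possible by \eqref{4.3}), compute $\Delta(h)=\varepsilon(1_1 x)\Delta(1_2)$, and expand $\Delta(1_2)$ using axiom (iii) applied to $\Delta^2(1_H)$ before collapsing the scalar via \eqref{4.5}.

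The principal obstacle lies in this third step: proving \eqref{4.14} and \eqref{4.15} requires simultaneous bookkeeping of two independent copies of $\Delta(1_H)$ (one coming from the definition of $\varepsilon_t$, another from the compatibility $\Delta(h)=\Delta(h)\Delta(1_H)$) and careful deployment of both forms of axiom (ii). Once these are in hand, the remaining identities reduce to short manipulations.
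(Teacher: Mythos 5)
The paper itself gives no proof of this proposition: it is stated as background in the preliminaries and the reader is referred to the cited sources (Böhm--Nill--Szlachányi and related works), so there is no internal argument to compare against. Your cascade is essentially the standard derivation from those sources, and each step you describe is sound: \eqref{4.5}--\eqref{4.6} from the $k=1_H$ specialization of axiom (ii), the idempotencies and \eqref{4.8}--\eqref{4.9} by substitution, \eqref{4.14}--\eqref{4.15} by the double-$\Delta(1_H)$ bookkeeping you correctly identify as the crux (one copy from $\varepsilon_t$, one from $\Delta(h)=\Delta(h)\Delta(1_H)$, recombined through axiom (iii) and the second form of axiom (ii)), and \eqref{4.10}--\eqref{4.11} by expanding $\Delta(1_2)$ with axiom (iii).

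One ordering issue deserves attention. To pass from \eqref{4.15} to \eqref{4.12} you compute $h_1\otimes\varepsilon_t(h_2)=h_1\varepsilon(1_1h_2)\otimes 1_2=\varepsilon_s(1_1)h\otimes 1_2$, and to conclude you need $\varepsilon_s(1_1)\otimes 1_2=1_1\otimes 1_2$; symmetrically, \eqref{4.14} gives $\varepsilon_s(h_1)\otimes h_2=1_1\otimes h\varepsilon_t(1_2)$, and you need $1_1\otimes\varepsilon_t(1_2)=1_1\otimes 1_2$. These two facts are exactly the $h=1_H$ cases of \eqref{4.12}--\eqref{4.13}, i.e.\ the content of \eqref{4.7}, which you propose to derive only afterwards from \eqref{4.12}--\eqref{4.13}. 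As written the chain \eqref{4.14}/\eqref{4.15} $\Rightarrow$ \eqref{4.12}/\eqref{4.13} $\Rightarrow$ \eqref{4.7} is therefore mildly circular. The fix is easy and should be made explicit: prove $\varepsilon_s(1_1)\otimes 1_2=1_1\otimes 1_2$ and $1_1\otimes\varepsilon_t(1_2)=1_1\otimes 1_2$ directly by applying $I\otimes\varepsilon\otimes I$ to the two factorizations of $\Delta^2(1_H)$ in axiom (iii); this establishes \eqref{4.7} first, after which your derivations of \eqref{4.12}--\eqref{4.13} go through verbatim. With that reordering the plan is complete and correct.
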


Therefore, $ H_t $ and $ H_s $ are subalgebras of $ H $ such that contain $ 1_H $ and 
\begin{eqnarray}
hk &=& kh  \mbox{ for all $h \in H_t \ \mbox{and} \ k \in H_s$}. \label{4.16}
\end{eqnarray}
Finally, it is still possible to show that
\begin{eqnarray}
\varepsilon_t(\varepsilon_t(h)k) &=& \varepsilon_t(h)\varepsilon_t(k)  \label{4.19} \\
\varepsilon_s(h\varepsilon_s(k)) &=& \varepsilon_s(h)\varepsilon_s(k),  \label{4.20}
\end{eqnarray}
for all $h, k \in H$.

Let $ H $ be a weak bialgebra. We say that $ H $ is a \textit{weak Hopf algebra} if there is a linear map $ S: H\longrightarrow H$, called \textit{antipode}, which satisfies:	\begin{enumerate}
		\item [(i)] $h_1S(h_2)=\varepsilon_t(h)$;
		\item [(ii)] $S(h_1)h_2=\varepsilon_s(h)$;
		\item [(iii)] $S(h_1)h_2S(h_3)=S(h),$
	\end{enumerate}
	for all $h \in H$. The antipode of a weak Hopf algebra is  \textit{anti-multiplicative}, that is, $ S (hk) = S (k) S (h) $, and \textit{anti-comultiplicative}, which means $ S (h) _1 \otimes S (h) _2 = S (h_2) \otimes S (h_1) $. 
	
\begin{prop}
Let $H$ be a weak Hopf algebra. Then, the following identities hold for all $h \in H$
\begin{eqnarray}
\varepsilon_t(h) &=& \varepsilon(S(h)1_1)1_2 \label{4.30}\\
\varepsilon_s(h) &=& 1_1\varepsilon(1_2S(h)) \label{4.31}\\
\varepsilon_t \circ S &=& \varepsilon_t \circ \varepsilon_s = S \circ \varepsilon_s \label{4.34}\\
\varepsilon_s \circ S &=& \varepsilon_s \circ \varepsilon_t = S \circ \varepsilon_t \label{4.35}\\
h_1 \otimes S(h_2)h_3 &=& h1_1 \otimes S(1_2) \  \label{4.38}\\
h_1S(h_2) \otimes h_3 &=& S(1_1) \otimes 1_2h. \  \label{4.39}
\end{eqnarray}
\end{prop}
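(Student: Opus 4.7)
My plan is to split the six identities into three tiers and prove them in order, each tier using the previous.

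The key observation throughout is that $S$ agrees with $\varepsilon_t$ on $H_s$ (dually, with $\varepsilon_s$ on $H_t$). Indeed, for $y \in H_s$, (\ref{4.11}) gives $\Delta(y) = 1_1 \otimes y 1_2$, so
\[
\varepsilon_t(y) = y_1 S(y_2) = 1_1 S(y 1_2) = 1_1 S(1_2) S(y) = \varepsilon_t(1_H) S(y) = S(y),
\]
using $\varepsilon_t(1_H) = 1_1 S(1_2) = 1_H$ from antipode axiom (i). Applying this with $y = \varepsilon_s(h)$ yields $\varepsilon_t \circ \varepsilon_s = S \circ \varepsilon_s$, and anti-comultiplicativity of $S$ closes the triangle: $\varepsilon_t(S(h)) = S(h)_1 S(S(h)_2) = S(h_2) S^2(h_1) = S(S(h_1) h_2) = S(\varepsilon_s(h))$. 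This settles (\ref{4.34}); (\ref{4.35}) follows by the symmetric argument.

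With (\ref{4.34})/(\ref{4.35}) at hand, I would next record the auxiliary identity $\varepsilon \circ S = \varepsilon$: since $\varepsilon(S(h)) = \varepsilon(\varepsilon_t(S(h))) = \varepsilon(S(\varepsilon_s(h)))$ and $S(\varepsilon_s(h)) = \varepsilon_t(\varepsilon_s(h)) \in H_t$, a further application of $\varepsilon \circ \varepsilon_t = \varepsilon$ gives $\varepsilon(S(h)) = \varepsilon(\varepsilon_s(h)) = \varepsilon(h)$. Then (\ref{4.30}) falls out of
\[
\varepsilon(S(h)1_1)1_2 \overset{(\ref{4.5})}{=} \varepsilon(S(h)\varepsilon_t(1_1))1_2 = \varepsilon(S(h)S(1_1))1_2 = \varepsilon(S(1_1 h))1_2 = \varepsilon(1_1 h)1_2 = \varepsilon_t(h),
\]
via anti-multiplicativity of $S$ and $\varepsilon \circ S = \varepsilon$ in the last two steps; (\ref{4.31}) is handled analogously through (\ref{4.6}).

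For (\ref{4.38})/(\ref{4.39}), I would first rewrite using coassociativity and the antipode axioms: $h_1 \otimes S(h_2) h_3 = (I \otimes \varepsilon_s)\Delta(h)$ and $h_1 S(h_2) \otimes h_3 = (\varepsilon_t \otimes I)\Delta(h)$. The strategy for (\ref{4.38}) would then be to expand $\Delta(h) = \Delta(h)\Delta(1_H)$ and apply the refinement $\varepsilon_s(hk) = S(k)\varepsilon_s(h)$ for $k \in H_t$, which is derivable from $\varepsilon_s(k) = S(k_1)k_2$ together with (\ref{4.10}), (\ref{4.16}), and the identity $S(1_a)1_b = \varepsilon_s(1_H) = 1_H$. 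This leads to an intermediate expression of the form $h_1 1_1 \otimes S(1_2)\varepsilon_s(h_2)$, which I would collapse to $h 1_1 \otimes S(1_2)$ by invoking axiom (iii) to coordinate the two copies of $\Delta(1_H)$ in play, together with (\ref{4.13}). Identity (\ref{4.39}) is attacked dually, starting from $\Delta(h) = \Delta(1_H)\Delta(h)$ and (\ref{4.12}).

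The main obstacle will be (\ref{4.38})/(\ref{4.39}). In contrast to the earlier identities, which reduce to short substitution chains, these involve $\Delta(h)$ interacting simultaneously with two independent copies of $\Delta(1_H)$ inside a tensor product. The delicate point is invoking the compatibility axiom (iii) at precisely the right moment to absorb the auxiliary $\Delta(1_H)$-factors; any less careful calculation leaves residual factors that do not reduce to the stated form.
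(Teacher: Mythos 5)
The paper itself states this proposition without proof (it is quoted from the weak Hopf algebra literature in the preliminaries), so your attempt has to be judged on its own terms. Your first two tiers are correct and complete: the observation that $S$ agrees with $\varepsilon_t$ on $H_s$ (and dually with $\varepsilon_s$ on $H_t$) is exactly the right lever for (\ref{4.34}) and (\ref{4.35}); the derivation of $\varepsilon\circ S=\varepsilon$ from them is sound; and the chains for (\ref{4.30}) and (\ref{4.31}) check out, the step $\varepsilon_t(1_1)\otimes 1_2=S(1_1)\otimes 1_2$ being legitimate precisely because $\Delta(1_H)\in H_s\otimes H_t$ by (\ref{4.7}).

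The gap is in (\ref{4.38}) and (\ref{4.39}), and you have in effect flagged it yourself. The reduction $h_1\otimes S(h_2)h_3=(I\otimes\varepsilon_s)\Delta(h)$ is correct, the refinement $\varepsilon_s(hk)=S(k)\varepsilon_s(h)$ for $k\in H_t$ is correct and correctly justified in outline, and together they do bring you to $h_1 1_1\otimes S(1_2)\varepsilon_s(h_2)$. But the remaining collapse to $h1_1\otimes S(1_2)$ is the actual content of the identity, and the tools you name do not close it as stated: (\ref{4.13}) controls $\varepsilon_s$ applied to the \emph{first} Sweedler leg, whereas your expression has $\varepsilon_s$ on the \emph{second} leg (that asymmetry is exactly why (\ref{4.38}) is not an instance of (\ref{4.13})); and axiom (iii) coordinates products of adjacent legs of two copies of $\Delta(1_H)$, whereas after expanding $\varepsilon_s(h_2)=1_{1'}\varepsilon(h_21_{2'})$ the relevant legs appear as $S(1_2)1_{1'}$ rather than $1_21_{1'}$, so the axiom cannot be invoked in the form given. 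What is actually missing is a concrete lemma of the type $y1_1\otimes S(1_2)=1_1\otimes S(1_2)\,y$ for $y\in H_s$ (applied with $y=\varepsilon_s(h_2)$, after which (\ref{props}) finishes the computation), or equivalently $h_1\otimes\varepsilon_s(h_2)=h1_1\otimes\varepsilon_s(1_2)$; every attempt to derive this from $\Delta(h)=\Delta(h)\Delta(1_H)$ and axiom (iii) alone tends to return to the starting expression. Until such a lemma is stated and proved, (\ref{4.38}) and (\ref{4.39}) remain open in your write-up, and (\ref{4.39}) inherits the same gap through the dual argument.
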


Hence, if $ H $ is a weak Hopf algebra,  $S(1_H) = 1_H$, $\varepsilon \circ S = \varepsilon$, $S(H_t)=H_s$, $S(H_s)=H_t$ and $ S (H) $ is also a weak Hopf algebra, with the same counit and antipode. It is  easy to see that every Hopf algebra is a weak Hopf algebra. Conversely we have the following result.
\begin{prop}
	A weak Hopf algebra is a Hopf algebra if one of the following equivalent conditions is satisfied:
	\begin{itemize}
		\item [(i)] $\Delta(1_H)=1_H \otimes 1_H$;
		\item[(ii)] $\varepsilon(hk) = \varepsilon(h)\varepsilon(k);$
		\item [(iii)]$h_1S(h_2)=\varepsilon(h)1_H; $
		\item [(iv)]$S(h_1)h_2 = \varepsilon(h)1_H;$
		\item [(v)]$H_t=H_s=\Bbbk 1_H;$
	\end{itemize}
	for all $h, k \in H$.	
\end{prop}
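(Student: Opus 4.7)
The natural approach is to prove that $H$ is a Hopf algebra if and only if condition (i) holds, and then obtain the remaining four equivalences by cycling through (i). The direction ``$H$ Hopf $\Rightarrow$ (i)'' is immediate, since in any bialgebra the coproduct is a unital algebra homomorphism.

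For the central implication ``(i) $\Rightarrow H$ Hopf'', the key observation is that under $\Delta(1_H) = 1_H \otimes 1_H$ the defining formulas for $\varepsilon_t$ and $\varepsilon_s$ both collapse to $\varepsilon_t(h) = \varepsilon_s(h) = \varepsilon(h) 1_H$. Substituting these into the weak antipode axioms immediately yields the classical Hopf identities $h_1 S(h_2) = \varepsilon(h) 1_H = S(h_1) h_2$, that is, (iii) and (iv). Multiplicativity of $\varepsilon$ (condition (ii)) also follows at once from the weak bialgebra axiom $\varepsilon(hk\ell) = \varepsilon(hk_1)\varepsilon(k_2\ell)$ by setting $k = 1_H$. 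Finally (v) is automatic, since $H_t = \varepsilon_t(H) = \Bbbk 1_H$ and symmetrically $H_s = \Bbbk 1_H$. Thus (i) alone forces all of (ii)--(v) and makes $H$ into an ordinary Hopf algebra.

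To close the cycle, each of (ii), (iii), (iv), (v) must be shown to imply (i). The common mechanism is: first establish $H_t = H_s = \Bbbk 1_H$; then use \eqref{4.7}, which says $\Delta(1_H) \in H_s \otimes H_t$, to write $\Delta(1_H) = \lambda\,(1_H \otimes 1_H)$ for some scalar $\lambda \in \Bbbk$; and finally pin down $\lambda = 1$. The first step is short in each case: under (ii), $\varepsilon_t(h) = \varepsilon(1_1)\varepsilon(h)\,1_2 = \varepsilon(h)\,1_H$ by the counit axiom $\varepsilon(1_1)1_2 = 1_H$, and analogously for $\varepsilon_s$; under (iii), $\varepsilon_t(h) = h_1 S(h_2) = \varepsilon(h)\,1_H$, and then $H_s = S(H_t) = \Bbbk\,S(1_H) = \Bbbk 1_H$ using $S(1_H) = 1_H$; case (iv) is symmetric, and (v) is already the hypothesis.

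The main obstacle is the final scalar normalization $\lambda = 1$. Writing $\Delta(1_H) = \lambda(1_H \otimes 1_H)$ and applying multiplicativity of $\Delta$ to $1_H = 1_H \cdot 1_H$ forces $\lambda^2 = \lambda$, so $\lambda \in \{0,1\}$; the possibility $\lambda = 0$ is excluded by applying $\varepsilon \otimes I$, which would give $0 = 1_H$. Hence $\lambda = 1$ and (i) is recovered. Everything else reduces to direct substitution into the identities gathered in Section~2.
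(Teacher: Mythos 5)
The paper states this proposition in the preliminaries without proof, citing the weak Hopf algebra literature, so there is no in-paper argument to compare against; your proposal is a correct, self-contained proof of the standard kind. The hub-and-spoke structure through (i) works: under (i) both $\varepsilon_t$ and $\varepsilon_s$ collapse to $h\mapsto\varepsilon(h)1_H$, which yields (ii)--(v) and the Hopf antipode identities, and each of (ii)--(v) forces $H_t=H_s=\Bbbk 1_H$ (for (iii) and (iv) via $\varepsilon_t(h)=h_1S(h_2)$, $S(H_t)=H_s$ and $S(1_H)=1_H$, all available in Section~2), after which \eqref{4.7} gives $\Delta(1_H)=\lambda(1_H\otimes 1_H)$ and your normalization $\lambda^2=\lambda$ together with $(\varepsilon\otimes I)\Delta(1_H)=1_H$ correctly pins down $\lambda=1$. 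The only cosmetic omission is the remark that $\varepsilon(1_H)=1$ (immediate from the counit axiom under (i)), which is needed to conclude that $\varepsilon$ is a unital algebra map and hence that $H$ is genuinely a bialgebra in the ordinary sense.
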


In order to construct an example of weak Hopf algebra, we present the following definition.

\begin{defn}[Groupoid] \label{grupoide}
	Consider $ \mathcal{G} $ a non-empty set with a binary operation partially defined which is denoted by concatenation. This operation is called product. Given $ g, h \in \mathcal{G} $, we write $ \exists gh $ whenever the product $ gh $ is set (similarly we use $ \nexists gh $ whenever the product is not defined). Thus, $ \mathcal{G} $ is called \textit{groupoid} if:
	
	\begin{itemize}
		\item [(i)] For all $g, h, l \in \mathcal{G}$, $\exists(gh)l$ if and only if $\exists g(hl)$, and, in this case, $(gh)l = g(hl)$;
		\item [(ii)] For all $g, h, l \in \mathcal{G}$, $\exists(gh)l$ if and only if $\exists gh$ and $\exists hl$;
		\item[(iii)] For each $g \in \mathcal{G}$ there are unique elements $d(g), r(g) \in \mathcal{G}$ such that $\exists gd(g)$, $\exists r(g)g$ and $gd(g)=g=r(g)g$;
		\item[(iv)] For each $g \in \mathcal{G}$ there exists an element such that $d(g)= g^{-1}g$ and $r(g)=gg^{-1}$.
	\end{itemize}
\end{defn}

Moreover, the element $ g^{-1}$ is the only one that satisfies such property and, in addition, $({g^{-1}})^{-1} = g$, for all $g \in \mathcal{G} $. An element $ e $ is said identity in $ \mathcal{G} $ if for some $ g \in \mathcal{G} $, $ e = d (g) = r (g^{-1})$. Therefore, $ e^2 = e $, which implies that $ d (e) = e = r (e) $ and $ e = e^{-1}$. We denote $ \mathcal{G}_0 $ the set of all identities elements of $ \mathcal{G} $. Besides that, one can define the set
$\mathcal{G}^{2} = \{(g,h) \in \mathcal{G} \times \mathcal{G} \ | \ \exists gh\}$ of all pairs of elements composable in $\mathcal{G}$.

\begin{prop}
	Let $\mathcal{G}$ be a groupoid. Then, for all $g, h \in \mathcal{G}$:
	\begin{itemize}
		\item [(i)] $\exists gh$ if and only if $d(g)=r(h)$ and, in this case, $d(gh) = d(h)$ and $r(gh)=r(g)$;
		\item [(ii)]  $\exists gh$ if and only if $\exists h^{-1} g^{-1}$ and, in this case, ${(gh)}^{-1} = h^{-1} g^{-1}$.
	\end{itemize}
\end{prop}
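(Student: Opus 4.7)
The plan is to establish the central equivalence $\exists gh \Leftrightarrow d(g) = r(h)$ of part (i) first, as the remaining assertions reduce easily to it. The reverse implication is immediate: from $d(g) = r(h)$, the groupoid axiom (iii) yields $\exists gd(g)$ and $\exists d(g)h = \exists r(h)h$, so axiom (ii) applied to the triple $(g, d(g), h)$ gives $\exists (gd(g))h = \exists gh$. For the forward implication, starting from $\exists gh$, I would successively left-multiply by $g^{-1}$ and right-multiply by $h^{-1}$, using axioms (i), (ii), and (iv), to obtain first $\exists d(g)h$ (since $g^{-1}(gh) = (g^{-1}g)h$) and then $\exists d(g)r(h)$ (since $(d(g)h)h^{-1} = d(g)(hh^{-1})$). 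The problem then reduces to showing that any two identities $e, f \in \mathcal{G}_0$ with $\exists ef$ must coincide.

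This final subclaim is the main obstacle. My approach: first, the computations $(ef)f = e(ff) = ef$ and $e(ef) = (ee)f = ef$, combined with the uniqueness in axiom (iii), give $d(ef) = f$ and $r(ef) = e$. From axiom (iv) one reads off $d(x^{-1}) = r(x)$ and $r(x^{-1}) = d(x)$, so $d((ef)^{-1}) = e$ and $r((ef)^{-1}) = f$. Axiom (iii) applied to $(ef)^{-1}$ itself then yields $(ef)^{-1}e = (ef)^{-1}$ and $f(ef)^{-1} = (ef)^{-1}$. On the other hand, rewriting $(ef)^{-1}(ef) = d(ef) = f$ via associativity as $((ef)^{-1}e)f = f$ and invoking the uniqueness of $r(f) = f$ forces $(ef)^{-1}e = f$; symmetrically, $(ef)(ef)^{-1} = e$ rewrites as $e(f(ef)^{-1}) = e$ and gives $f(ef)^{-1} = e$. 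Comparing yields $(ef)^{-1} = f$ and $(ef)^{-1} = e$, hence $e = f$.

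The remaining statements then follow routinely. For $d(gh) = d(h)$ and $r(gh) = r(g)$, one computes $(gh)d(h) = g(hd(h)) = gh$ and $r(g)(gh) = (r(g)g)h = gh$, and applies the uniqueness in axiom (iii). For part (ii), the equivalence follows from part (i) together with $d(x^{-1}) = r(x)$ and $r(x^{-1}) = d(x)$: $\exists gh \Leftrightarrow d(g) = r(h) \Leftrightarrow r(g^{-1}) = d(h^{-1}) \Leftrightarrow \exists h^{-1}g^{-1}$. Finally, to establish $(gh)^{-1} = h^{-1}g^{-1}$, I would verify that $h^{-1}g^{-1}$ fulfils the defining relations of the inverse from axiom (iv) via associativity: $(gh)(h^{-1}g^{-1}) = g(hh^{-1})g^{-1} = gd(g)g^{-1} = gg^{-1} = r(g) = r(gh)$, and analogously $(h^{-1}g^{-1})(gh) = d(gh)$; the uniqueness of the inverse, stated just before the proposition, then closes the argument.
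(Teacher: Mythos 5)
Your proof is correct. The paper states this proposition as a standard preliminary fact about groupoids and gives no proof of its own, so there is nothing to compare against; judged on its own terms, your argument is complete and sound. The one genuinely non-routine step --- that two composable identities $e,f$ must coincide --- is handled correctly: you derive $d(ef)=f$ and $r(ef)=e$ from the uniqueness clause of axiom (iii), and then pin down $(ef)^{-1}$ in two ways (as $f$ via $((ef)^{-1}e)f=f$, and as $e$ via $e(f(ef)^{-1})=e$) to force $e=f$. The existence bookkeeping implicit in each associativity rewrite (e.g.\ that $\exists (g^{-1}g)h$ follows from $\exists g^{-1}g$ and $\exists gh$ by axiom (ii), which then transfers along axiom (i)) is exactly what the axioms as stated in the paper supply, and your reductions of the remaining claims to the central equivalence $\exists gh \Leftrightarrow d(g)=r(h)$, together with the identities $d(x^{-1})=r(x)$ and $r(x^{-1})=d(x)$ and the uniqueness of inverses, are all valid.
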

\begin{ex}[Groupoid Algebra] \label{algebradegrupoide} Let $\mathcal{G}$ be a groupoid such that the cardinality of $\mathcal{G}_{0}$ is finite and $ \Bbbk \mathcal{G}$ the vector space with basis indexed by the elements of $\mathcal{G}$ given by $\lbrace \delta_g \rbrace_{g\in \mathcal{G}}$. Then, $\Bbbk \mathcal{G}$ is a weak Hopf algebra with the following structures
	\begin{eqnarray*}
		m(\delta_g\otimes \delta_h)=\left\{
		\begin{array}{rl}
			\delta_{gh}, & \text{if $\exists gh$ },\\
			0, & \text{ otherwise }
		\end{array} \right. \ \  \ \ \ \ u(1_{\Bbbk})=1_\mathcal{G} = \sum_{e \in \mathcal{G}_{0}} \delta_e
	\end{eqnarray*}	
	$$ \Delta(\delta_g)=\delta_g\otimes \delta_g \ \ \ \ \ \ \ \ \ \ \ \ \ \ \ \  \ 
	\varepsilon(\delta_g)=1_{\Bbbk} \ \ \ \ \ \ \ \ \ \ \ \ \ \ \ \  \  S(\delta_g)=\delta_{g^{-1}}.$$
	
	\label{ex_grupoide}
\end{ex}

Remark that when it is assumed that the dimension of a weak Hopf algebra $ H $ is finite, it is obtained that the dual structure $H^{*}=Hom({H, \Bbbk})$ is a weak Hopf algebra with the convolution product $(f*g)(h)=m(f\otimes g)(h)=f(h_1)g(h_2)$ for all $h \in H,$ the unit $u_{H^*}(1_{\Bbbk})=1_{H^*}= \varepsilon_H$, the coprodut defined by the relation $\Delta_{H^*}(f) = f_1 \otimes f_2 \Leftrightarrow f(hk)=f_1(h)f_2(k)$ for all $h,k \in H$, and the counit $\varepsilon_{H^*}(f)=f(1_H).$
Besides that, we have
$(\varepsilon_t)_{_{H^*}}(f)= f \circ \varepsilon_t$
and
$(\varepsilon_s)_{_{H^*}}(f)= f \circ \varepsilon_s.$

\begin{ex} [Dual Groupoid Algebra] Let $\mathcal{G}$ be a finite groupoid and $ (\Bbbk \mathcal{G})^{*}$  the  vector space with basis indexed by the elements of $\mathcal{G}$ given by $\lbrace p_g \rbrace_{g\in \mathcal{G}}$, where
	\begin{eqnarray*}
		p_g(\delta_h)=\left\{
		\begin{array}{rl}
			1_{\Bbbk}, & \text{if $g=h$ },\\
			0, & \text{ otherwise. }
		\end{array} \right.
	\end{eqnarray*}	
	
	Then, $(\Bbbk \mathcal{G})^{*}$ is a weak Hopf algebra with the following structures
	\begin{eqnarray*}
		p_g * p_h=\left\{
		\begin{array}{rl}
			p_g, & \text{if $g=h$ },\\
			0, & \text{ otherwise }
		\end{array} \right.  \ \ \ \ \ \ \  \ \ 	1_{(\Bbbk \mathcal{G})^{*}} = \sum_{g \in \mathcal{G}} p_g 
	\end{eqnarray*}	
	$$\Delta_{_{(\Bbbk \mathcal{G})^{*}}}(p_g)=\sum_{h \in \mathcal{G}, \exists h^{-1}g} p_h \otimes p_{h^{-1}g} \ \ \ \ \ \ \ \ \varepsilon_{_{(\Bbbk \mathcal{G})^{*}}}(p_g)=p_g(1_{\Bbbk \mathcal{G}}) \ \ \ \ \ \ \ \ S_{_{(\Bbbk \mathcal{G})^{*}}}(p_g)=p_{g} \circ S.$$
	
	\label{ex_grupoidedual}
\end{ex}

The following example of weak Hopf algebra was presented by G. Böhm and J. Gómes-Torrecillas in \cite{Bohmexemplo}.

\begin{ex}\label{exemplodogrupo}
	Consider $G$ a finite abelian group with cardinality $|G|$, where $|G|$  is not a multiple of the characteristic of $\Bbbk$. If we consider $\Bbbk G$ the algebra with basis indexed by the elements of $G$ and with coalgebra structure given by
	
	$$\Delta(g)=\frac{1}{|G|}\sum_{h \in {G}} gh  \otimes h^{-1} \ \ \ \ \varepsilon(g)=\left\{
	\begin{array}{rl}
	|G|, & \text{if $g=1_G$ },\\
	0, & \text{ otherwise. }
	\end{array} \right.
	$$
	Then, $\Bbbk G$ is a weak Hopf algebra with antipode defined by $S(g)=g$. Besides that, $\varepsilon_s(g)=\varepsilon_t(g)=g$, for all $g \in G$, what implies that $H_s=H_t=\Bbbk G$.
	
\end{ex}

\section{Comodule Coalgebra} 
Consider $H$ a weak bialgebra. In \cite{Wang}, Yu. Wang and L. Zhang defined $C$ a (left) \textit{$H$-comodule coalgebra}  when  there exits a linear map 
\begin{eqnarray*}
	\rho: C & \longrightarrow & H\otimes C\\
	c & \longmapsto & c^{-1} \otimes c^0
\end{eqnarray*}
such that for all $c\in C$
\begin{enumerate}
	\item [(CC1)] $(\varepsilon_H \otimes I_C)\rho(c)=c$
	\item [(CC2)] $(I_H \otimes \Delta_C)\rho(c) = (m_H \otimes I_C \otimes I_C)(I_H \otimes \tau_{C , H} \otimes I_C)(\rho \otimes \rho)\Delta_C(c)$
	\item [(CC3)] $(I_H \otimes \rho)\rho(c) = (\Delta_H \otimes I_C)\rho(c)$
	\item[(CC4)] $(I_H \otimes \varepsilon_C) \rho(c)= (\varepsilon_t \otimes \varepsilon_C) \rho(c).$
\end{enumerate}

In this case, it is said that $H$ coacts on the coalgebra $C$. 

\begin{prop}\label{caracrho}
Let $H$ be a weak Hopf algebra. If there exists a linear map
	\begin{eqnarray*}
		\rho: C & \longrightarrow & H\otimes C\\
		c & \longmapsto & c^{-1} \otimes c^0
	\end{eqnarray*}
that satisfies (CC1)-(CC3), then the condition (CC4) is satisfied.
\end{prop}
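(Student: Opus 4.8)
The plan is to reduce (CC4) to a membership statement and then exploit the antipode. Writing $\rho(c)=c^{-1}\otimes c^0$ and setting $\ell(c):=c^{-1}\varepsilon_C(c^0)\in H$, the linearity of $\varepsilon_t$ gives $(\varepsilon_t\otimes\varepsilon_C)\rho(c)=\varepsilon_t(c^{-1})\varepsilon_C(c^0)=\varepsilon_t(\ell(c))$, while $(I_H\otimes\varepsilon_C)\rho(c)=\ell(c)$. Hence (CC4) is equivalent to $\ell(c)=\varepsilon_t(\ell(c))$, i.e. to $\ell(c)\in H_t=\varepsilon_t(H)$, which by the idempotency \eqref{4.3} of $\varepsilon_t$ is exactly what I must establish.

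First I would extract the structural identities hidden in (CC1)--(CC3). Applying $I_H\otimes I_H\otimes\varepsilon_C$ to (CC3) yields $\Delta_H(\ell(c))=c^{-1}\otimes\ell(c^0)$, so that $\ell$ is a morphism of left $H$-comodules from $(C,\rho)$ to $(H,\Delta_H)$; feeding this into the antipode axiom $h_1S(h_2)=\varepsilon_t(h)$ gives the first identity $\varepsilon_t(\ell(c))=\ell(c)_1 S(\ell(c)_2)=c^{-1}S(\ell(c^0))$. Independently, rewriting $c^{-1}=\varepsilon_t((c^{-1})_1)(c^{-1})_2$ by \eqref{propt} and then using (CC3) to merge the two coactions produces a second identity $\ell(c)=\varepsilon_t(c^{-1})\ell(c^0)$, while the counit axiom (CC1) gives the parallel relation $\ell(c)=\varepsilon_H(c^{-1})\ell(c^0)$.

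The final step is to close the argument with the multiplicativity axiom (CC2), and this is where I expect the real difficulty to lie. The point is that (CC3) is symmetric -- it merely unfolds and refolds the iterated coaction -- so any manipulation that combines (CC3) with the antipode alone just reproduces the desired equality and is therefore circular; indeed the two identities above, and the convolution idempotency $\ell(c)=\ell(c_1)\ell(c_2)$ obtained from (CC2), are each individually self-consistent. The genuine content must come from (CC2), the only axiom that sees $\Delta_C$ and hence distinguishes the two legs of the coaction. Concretely, I would expand $\ell(c)=\ell(c_1)\ell(c_2)$, apply $\Delta_H$ multiplicatively together with the comodule identity $\Delta_H(\ell(c))=c^{-1}\otimes\ell(c^0)$, and use the anti-multiplicativity of $S$ to telescope the inner antipode pair via $h_1S(h_2)=\varepsilon_t(h)$; the surviving $H_t$-factors would then be absorbed using the module identities \eqref{4.8} and \eqref{4.19} together with \eqref{propt}, and (CC1) would collapse the residual inner coaction, leaving exactly $\ell(c)=c^{-1}\varepsilon_C(c^0)$. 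The main obstacle is thus orchestrating this symmetry-breaking so that the antipode telescopes cleanly; once $\ell(c)$ is shown to be fixed by $\varepsilon_t$, the condition (CC4) follows at once.
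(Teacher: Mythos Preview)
Your reformulation via $\ell(c)=c^{-1}\varepsilon_C(c^0)$ is correct and useful, and the preliminary identities you extract are exactly the ones the paper's computation implicitly uses: $\Delta_H(\ell(c))=c^{-1}\otimes\ell(c^0)$, $\varepsilon_t(\ell(c))=c^{-1}S(\ell(c^0))$, and the convolution idempotency $\ell(c)=\ell(c_1)\ell(c_2)$. Up to this point your plan and the paper coincide.

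The gap is precisely where you say it is: the closing step. Your sketch proposes to finish with \eqref{4.8}, \eqref{4.19} and \eqref{propt}, but those identities only move $\varepsilon_t$ around inside $\varepsilon_t(\cdots)$ and therefore cannot break the circularity you yourself diagnose. The missing ingredient is \eqref{4.14}, $h\,\varepsilon_t(k)=\varepsilon_H(h_1k)\,h_2$, which is the only listed identity that converts an $H_t$-factor on the right into a bare $\varepsilon_H$ on the left and thereby lets (CC1) enter. In your notation the paper's argument reads, in one line,
\[
\varepsilon_t(\ell(c))
=\ell(c_1)\,\varepsilon_t(\ell(c_2))
\overset{\eqref{4.14}}{=}\varepsilon_H\bigl(\ell(c_1)_1\,\ell(c_2)\bigr)\,\ell(c_1)_2
=\varepsilon_H\bigl(c_1^{-1}\ell(c_2)\bigr)\,\ell(c_1^{\,0})
\overset{\text{(CC2)}}{=}\varepsilon_H(c^{-1})\,\ell(c^{0})
\overset{\text{(CC1)}}{=}\ell(c),
\]
where the first equality is your identity $\varepsilon_t(\ell(c))=c^{-1}S(\ell(c^0))$ combined with the (CC2)-consequence $c^{-1}\otimes c^{0}=\ell(c_1)\,c_2^{-1}\otimes c_2^{\,0}$, and the third uses $\Delta_H(\ell(c_1))=c_1^{-1}\otimes\ell(c_1^{\,0})$. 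So the ``orchestration'' you were looking for is a single application of \eqref{4.14}; once you add that, your proof is complete and is essentially the paper's proof rewritten in the $\ell$-language.
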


\begin{proof}
Suppose that there is a linear map $\rho$ that  satisfies the conditions (CC1)-(CC3), then for every $c \in C$
	\begin{eqnarray*}
		\varepsilon_t(c^{-1})\varepsilon_C(c^0) &=& {c^{-1}}_1S_H({c^{-1}}_2)\varepsilon_C(c^0) \\
		&\stackrel{(CC3)}{=}& {c^{-1}}S_H({c^{0-1}})\varepsilon_C(c^{00})\\
		&\stackrel{(CC2)}{=}& {c_1}^{-1}{{c_2}^{-1}}S_H({{c_2}^{0-1}}) \varepsilon_C({c_2}^{00})\varepsilon_C({c_1}^{0}) \\
		&\stackrel{(CC3)}{=}&{c_1}^{-1}{{c_2}^{-1}}_1S_H({{c_2}^{-1}}_2) \varepsilon_C({c_1}^{0}) \varepsilon_C({c_2}^0) \\
		&\stackrel{(\ref{4.14})}{=}&{{c_1}^{-1}}_2 \varepsilon_H({{c_1}^{-1}}_1{c_2}^{-1}) \varepsilon_C({c_1}^0) \varepsilon_C({c_2}^0) \\
		&\stackrel{(CC3)}{=}&{c_1}^{0-1} \varepsilon_C({c_1}^{00}) \varepsilon_C({c_2}^0) \varepsilon_H({c_1}^{-1}{c_2}^{-1})\\
		&\stackrel{(CC2)}{=}&{{c^0}_1}^{-1} \varepsilon_C({{c^0}_1}^0) \varepsilon_C({c^0}_2) \varepsilon_H(c^{-1})\\
		&\stackrel{(CC1)}{=}& {c_1}^{-1} \varepsilon_C({c_1}^0)\varepsilon_C(c_2)\\
			&=&(I_H \otimes \varepsilon_C)\rho({c}).
	\end{eqnarray*}
\end{proof}

\begin{ex} \cite{Wang}
	Consider $H$ a weak Hopf algebra finite dimensional. Then, $H$ is a $H^*$-comodule coalgebra defined by
	\begin{eqnarray*}
		\rho: H & \longrightarrow & H^* \otimes H\\
		h_i & \longmapsto & {h_i}^* \otimes h_i
	\end{eqnarray*}
	where $\{h_i\}_{i=1}^{n}$ is a basis for $H$ and $\{{h_i}^*\}_{i=1}^{n}$ is the dual basis for $H^*$.
\end{ex}

\subsection{Partial Comodule Coalgebra}
\quad
In this section, the main purpose is to introduce the concept of a partial coaction of a weak Hopf algebra $H$ on a coalgebra. It is also introduced some examples that support the theory exposed here and some properties.

\begin{defn}
We say that $C$ is a (left) partial $H$-comodule coalgebra (or that $H$ coacts partially on $C$) if there exists a linear map
	$$\begin{array}{rl}
	\overline{\rho}: C & \longrightarrow  H\otimes C\\
	c & \longmapsto c^{\overline{-1}} \otimes c^{\overline{0}}
	\end{array}$$
such that for all $c \in C$
	
	\begin{enumerate}
		\item [(\label{CCP1}CCP1)] $(\varepsilon_H \otimes I_C)\overline{\rho}(c)=c$
		\item [(\label{CCP2}CCP2)] $(I_H \otimes \Delta_C)\overline{\rho}(c) = (m_H \otimes I_C \otimes I_C)(I_H \otimes \tau_{C , H} \otimes I_C)(\overline{\rho} \otimes \overline{\rho})\Delta_C(c)$
		\item [(\label{CCP3}CCP3)] $(I_H \otimes \overline{\rho}) \overline{\rho}(c) = (m_H \otimes I_H \otimes I_C)[(I_H \otimes \varepsilon_C)(\overline{\rho}(c_1)) \otimes (\Delta_H \otimes I_C)(\overline{\rho}(c_2))]$.
	\end{enumerate}
\end{defn}

Moreover, $C$ is said a (left) \textit{symmetric partial $H$-comodule coalgebra} if, in addition, satisfies
$$(I_H \otimes \overline{\rho}) \overline{\rho}(c) = (m_H \otimes I_H \otimes I_C)(I_H \otimes \tau_{H \otimes C, H})[(\Delta_H \otimes I_C)(\overline{\rho}(c_1)) \otimes (I_H \otimes \varepsilon_C)(\overline{\rho}(c_2))].$$

 \begin{obs}
 	Every $H$-comodule coalgebra is a partial $H$-comodule coalgebra. Indeed for every $c \in C$
 \begin{eqnarray*}
	(I_H \otimes {\rho}) {\rho}(c)&=&c^{{-1}} \otimes c^{{0-1}} \otimes c^{00}\\
	&\stackrel{(CC3)}{=}&{c^{-1}}_1 \otimes {c^{-1}}_2 \otimes c^0\\
	&\stackrel{(CC2)}{=}&{{c_1}^{-1}}_1 {{c_2}^{-1}}_1\otimes {{c_1}^{-1}}_2 {{c_2}^{-1}}_2 \otimes {c_2}^0 \varepsilon_C({c_1}^0)\\
	&\stackrel{\ref{caracrho}}{=}&{\varepsilon_t({c_1}^{-1})}_1 {{c_2}^{-1}}_1\otimes {\varepsilon_t({c_1}^{-1})}_2 {{c_2}^{-1}}_2 \otimes {c_2}^0 \varepsilon_C({c_1}^0)\\
	&\stackrel{(\ref{4.10})}{=}&{1_H}_1\varepsilon_t({c_1}^{-1}) {{c_2}^{-1}}_1\otimes {1_H}_2 {{c_2}^{-1}}_2 \otimes {c_2}^0 \varepsilon_C({c_1}^0)\\
	&\stackrel{(\ref{4.16})}{=}&\varepsilon_t({c_1}^{-1}) {1_H}_1{{c_2}^{-1}}_1\otimes {1_H}_2 {{c_2}^{-1}}_2 \otimes {c_2}^0 \varepsilon_C({c_1}^0)\\
	&\stackrel{(\ref{4.1})}{=}&{c_1}^{-1} {{c_2}^{-1}}_1\otimes {{c_2}^{-1}}_2 \otimes {c_2}^0 \varepsilon_C({c_1}^0)\\
	&=& (m_H \otimes I_H \otimes I_C)[(I_H \otimes \varepsilon_C)({\rho}(c_1)) \otimes (\Delta_H \otimes I_C)({\rho}(c_2))].
\end{eqnarray*}
\end{obs}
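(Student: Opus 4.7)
The axioms (CCP1) and (CCP2) coincide literally with (CC1) and (CC2) of a global coaction, so both hold without any work; the entire content of the remark reduces to verifying (CCP3). My plan is to start from $(I_H\otimes\rho)\rho(c)$ and push it, step by step, into the shape prescribed on the right-hand side.

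The first move is to apply (CC3), which collapses $(I_H\otimes\rho)\rho(c)$ into $(\Delta_H\otimes I_C)\rho(c) = {c^{-1}}_1\otimes{c^{-1}}_2\otimes c^0$. Next I would invoke (CC2) together with the counit of $C$ to rewrite $c^{-1}\otimes c^0$ as ${c_1}^{-1}{c_2}^{-1}\,\varepsilon_C({c_1}^0)\otimes{c_2}^0$ (this is (CC2) followed by collapsing one of the two $\Delta_C$-factors against $\varepsilon_C$), and then apply $\Delta_H$ to the first tensorand. The resulting expression $[{c_1}^{-1}{c_2}^{-1}]_1\otimes[{c_1}^{-1}{c_2}^{-1}]_2\,\varepsilon_C({c_1}^0)\otimes{c_2}^0$ already looks close to the right-hand side of (CCP3), except that the factor ${c_1}^{-1}$ is entangled inside both coproduct components rather than sitting outside the $\Delta_H$ that acts only on ${c_2}^{-1}$.

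The crux is disentangling ${c_1}^{-1}$, and for this I would promote it into $H_t$. Since the scalar $\varepsilon_C({c_1}^0)$ is already present, Proposition \ref{caracrho} (the (CC4) identity $c^{-1}\varepsilon_C(c^0)=\varepsilon_t(c^{-1})\varepsilon_C(c^0)$) lets me replace ${c_1}^{-1}$ by $\varepsilon_t({c_1}^{-1})$ even after $\Delta_H$ has been applied. Once ${c_1}^{-1}$ lives in $H_t$, identity (\ref{4.10}) expands its coproduct as $1_1\,\varepsilon_t({c_1}^{-1})\otimes 1_2$; the commutativity (\ref{4.16}) between $H_t$ and $H_s$ slides $\varepsilon_t({c_1}^{-1})$ to the left past $1_1$; identity (\ref{4.1}) absorbs the leftover $1_1\otimes 1_2$ into $\Delta({c_2}^{-1})$; and a final appeal to (CC4) in reverse converts $\varepsilon_t({c_1}^{-1})\varepsilon_C({c_1}^0)$ back into ${c_1}^{-1}\varepsilon_C({c_1}^0)$, producing exactly the right-hand side of (CCP3).

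The main obstacle I anticipate is purely bookkeeping. Because $\Delta_H$ has already acted by the time the replacement is made, the ``phantom'' factor ${c_1}^{-1}$ appears on both sides of a tensor simultaneously and has to be recognized as an $H_t$-element on each side before the weak-bialgebra identities for $H_t$ and $H_s$ can be brought to bear coherently; keeping track of which factor commutes with which, and matching each use of (CC4) with a later reverse use, is where the argument is most delicate.
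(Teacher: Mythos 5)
Your proposal is correct and follows essentially the same route as the paper: reduce to (CCP3), apply (CC3) then (CC2) with $\varepsilon_C$, use Proposition \ref{caracrho} to promote ${c_1}^{-1}$ to $\varepsilon_t({c_1}^{-1})$, and then disentangle it via (\ref{4.10}), (\ref{4.16}) and (\ref{4.1}) before reversing the $\varepsilon_t$-substitution. Your explicit mention of the final reverse appeal to (CC4) is a point the paper leaves implicit in its last step, but the argument is the same.
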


\begin{prop}\label{caraccomod}
Let $C$ be a partial $H$-comodule coalgebra. Then, $C$ is a $H$-comodule coalgebra if and only if $c^{\overline{-1}}\varepsilon_C(c^{\overline{0}}) = \varepsilon_t(c^{\overline{-1}})\varepsilon_C(c^{\overline{0}})$ for all $c\in C$.
\end{prop}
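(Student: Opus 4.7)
The plan is to treat the two directions separately, with the forward implication being a direct consequence of results already in the text and the converse requiring a short but delicate manipulation. For the forward direction, assuming $C$ is an $H$-comodule coalgebra, the Remark preceding the statement presents $C$ as a partial $H$-comodule coalgebra with $\overline{\rho}=\rho$, and Proposition \ref{caracrho} ensures (CC4), which reads $c^{-1}\varepsilon_C(c^{0}) = \varepsilon_t(c^{-1})\varepsilon_C(c^{0})$---exactly the required identity.

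For the converse, since (CCP1) and (CCP2) coincide with (CC1) and (CC2), it suffices to deduce (CC3). Starting from (CCP3),
\[
(I_H \otimes \overline{\rho})\overline{\rho}(c) = c_1^{\overline{-1}}\varepsilon_C(c_1^{\overline{0}})(c_2^{\overline{-1}})_1 \otimes (c_2^{\overline{-1}})_2 \otimes c_2^{\overline{0}},
\]
the hypothesis replaces $c_1^{\overline{-1}}\varepsilon_C(c_1^{\overline{0}})$ by $\varepsilon_t(c_1^{\overline{-1}})\varepsilon_C(c_1^{\overline{0}})$. On the other hand, applying a counit to one of the coalgebra factors in (CCP2) yields $c^{\overline{-1}} \otimes c^{\overline{0}} = c_1^{\overline{-1}}c_2^{\overline{-1}}\varepsilon_C(c_1^{\overline{0}}) \otimes c_2^{\overline{0}}$, so that $(\Delta_H \otimes I_C)\overline{\rho}(c)$ expands, via multiplicativity of $\Delta_H$, as $(c_1^{\overline{-1}})_1(c_2^{\overline{-1}})_1 \otimes (c_1^{\overline{-1}})_2(c_2^{\overline{-1}})_2 \otimes c_2^{\overline{0}}\varepsilon_C(c_1^{\overline{0}})$. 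Promoting the hypothesis via $\Delta_H$ and using (\ref{4.10}) on $\varepsilon_t(c_1^{\overline{-1}}) \in H_t$ gives $\Delta_H(c_1^{\overline{-1}})\varepsilon_C(c_1^{\overline{0}}) = 1_1\varepsilon_t(c_1^{\overline{-1}}) \otimes 1_2\varepsilon_C(c_1^{\overline{0}})$; substituting this, then commuting $1_1$ (which lies in $H_s$ by (\ref{4.7})) past $\varepsilon_t(c_1^{\overline{-1}})$ via (\ref{4.16}), and finally absorbing the remaining factor $\Delta(1_H)$ into $c_2^{\overline{-1}}$ by (\ref{4.1}), reduces the expression to $\varepsilon_t(c_1^{\overline{-1}})\varepsilon_C(c_1^{\overline{0}})(c_2^{\overline{-1}})_1 \otimes (c_2^{\overline{-1}})_2 \otimes c_2^{\overline{0}}$, matching what was obtained above from (CCP3).

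The main obstacle I foresee is purely bookkeeping: tracking which Sweedler indices refer to $\Delta_C$ and which to $\Delta_H$, and isolating the moment at which the hypothesis---once promoted by $\Delta_H$ into the $H_t$-structural identity (\ref{4.10})---collapses the $\Delta_H$-indices on $c_1^{\overline{-1}}$ into a single $\varepsilon_t(c_1^{\overline{-1}})$ sitting in the first tensor slot. Once this substitution is in place, the standard weak bialgebra identities (\ref{4.1}), (\ref{4.7}) and (\ref{4.16}) carry the calculation to its conclusion mechanically.
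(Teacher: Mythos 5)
Your proof is correct and follows essentially the same route as the paper's: both directions reduce to verifying (CC3), using the hypothesis together with (\ref{4.1}), (\ref{4.10}), (\ref{4.16}), multiplicativity of $\Delta_H$ and (CCP2), the only cosmetic difference being that you meet in the middle at the expression $\varepsilon_t(c_1^{\overline{-1}})\varepsilon_C(c_1^{\overline{0}})(c_2^{\overline{-1}})_1 \otimes (c_2^{\overline{-1}})_2 \otimes c_2^{\overline{0}}$ rather than writing a single chain of equalities. The forward direction via the Remark and Proposition \ref{caracrho} is likewise how the paper (implicitly) disposes of that implication.
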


\begin{proof}
	Suppose that $C$ is a partial $H$-comodule coalgebra that satisfies
	$c^{\overline{-1}}\varepsilon_C(c^{\overline{0}}) = \varepsilon_t(c^{\overline{-1}})\varepsilon_C(c^{\overline{0}})$, then it is enough to show that
	$(I_H \otimes \overline{\rho})\overline{\rho}(c) = (\Delta_H \otimes I_C)\overline{\rho}(c)$:
	\begin{eqnarray*}
		(I_H \otimes \overline{\rho})\overline{\rho}(c) 		&\stackrel{(\ref{4.1})}{=}&{c_1}^{\overline{-1}}\varepsilon_C({c_1}^{\overline{0}}){1_H}_1{{c_2}^{\overline{-1}}}_1\otimes {1_H}_2{{c_2}^{\overline{-1}}}_2 \otimes {c_2}^{\overline{0}}\\
		&\stackrel{(\ref{4.16})}{=}&{1_H}_1\varepsilon_t(c^{\overline{-1}})\varepsilon_C({c_1}^{\overline{0}}){{c_2}^{\overline{-1}}}_1\otimes {1_H}_2{{c_2}^{\overline{-1}}}_2 \otimes {c_2}^{\overline{0}}\\
		&\stackrel{(\ref{4.10})}{=}&{\varepsilon_t(c^{\overline{-1}})}_1\varepsilon_C({c_1}^{\overline{0}}){{c_2}^{\overline{-1}}}_1\otimes {\varepsilon_t(c^{\overline{-1}})}_2{{c_2}^{\overline{-1}}}_2 \otimes {c_2}^{\overline{0}}\\
		&\stackrel{(CCP2)}{=}&{c^{\overline{-1}}}_1\varepsilon_C({c^{\overline{0}}}_1)\otimes {c^{\overline{-1}}}_2 \otimes {c^{\overline{0}}}_2\\
		&=&(\Delta_H \otimes I_C)\overline{\rho}(c).
	\end{eqnarray*}
\end{proof}
\begin{ex}
Consider $\Bbbk \mathcal{G}$ a groupoid algebra, where $\mathcal{G}$ is generated by the disjoint union of the finite groups $G_1$ and $ G_2$. Therefore, the group algebra $\Bbbk G_1$ is a partial $(\Bbbk \mathcal{G})^*$-comodule coalgebra via
	\begin{eqnarray*}
		\rho:\Bbbk G_1 &\rightarrow& (\Bbbk \mathcal{G})^{*} \otimes \Bbbk G_1\\
		h &\mapsto& p_{e_1} \otimes h,
	\end{eqnarray*}	
where $e_1$ is the identity  element of $G_1$.
\end{ex}

\subsection{Coactions via $\rho_h$}
\quad 
In this section it is explored a specific family of examples of partial comodule coalgebra. We say  that $C$ is a (left) \textit{$H$-comodule coalgebra via $\rho_h$} if, for some $h \in H$ fixed, the linear map
\begin{eqnarray*}
	\rho_h: C & \longrightarrow & H\otimes C\\
	c & \longmapsto & h \otimes c
\end{eqnarray*}
defines a structure of comodule coalgebra on $C$. Note that since $H$ is a weak Hopf algebra, the above application does not always defines a structure of $H$-comodule coalgebra on $C$. To see this, it is enough to observe that $\rho_{1_H}(c) = 1_H \otimes c$ turns out $C$ on a comodule coalgebra if and only if $H$ is a Hopf algebra. The following result has the intention to characterize the properties that an element $h \in H$ must to satisfy in order that $\rho_h$ be a coaction of $H$ on a coalgebra $C$.

\begin{prop}\label{rho_h}
We say that	$C$ is a (left) $H$-comodule coalgebra via $\rho_h$ if and only if
	\begin{enumerate}
		\item [(i)] $\varepsilon_H(h)=1_{\Bbbk}$
		\item[(ii)] $h^2=h$
		\item [(iii)] $\Delta_H(h)=h \otimes h.$
	\end{enumerate}
\end{prop}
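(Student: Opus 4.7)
The plan is to verify both directions by direct computation, checking each of the axioms (CC1), (CC2), (CC3) against the definition $\rho_h(c)=h\otimes c$, and then invoking Proposition \ref{caracrho} so that (CC4) is automatic once the first three hold.

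For the forward implication, assume $\rho_h$ endows $C$ with the structure of an $H$-comodule coalgebra (so in particular $C\neq 0$). Axiom (CC1) reads $\varepsilon_H(h)c=c$ for every $c\in C$; choosing any $c\neq 0$ yields (i). For (CC2), the left-hand side computes to $h\otimes c_1\otimes c_2$, while the right-hand side, after swapping and multiplying in $H$, equals $h^2\otimes c_1\otimes c_2$. Applying $I_H\otimes I_C\otimes\varepsilon_C$ collapses this to $h\otimes c=h^2\otimes c$ in $H\otimes C$, which for a nonzero $c$ forces $h=h^2$, giving (ii). For (CC3), the left-hand side is $h\otimes h\otimes c$ and the right-hand side is $\Delta_H(h)\otimes c$; again using $c\neq 0$ and faithful flatness of $\Bbbk$-vector spaces we extract $\Delta_H(h)=h\otimes h$, which is (iii).

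For the converse, assume (i), (ii), (iii). Then (CC1) follows from (i) since $(\varepsilon_H\otimes I_C)\rho_h(c)=\varepsilon_H(h)c=c$. For (CC2) one computes directly
\begin{eqnarray*}
(m_H\otimes I_C\otimes I_C)(I_H\otimes\tau_{C,H}\otimes I_C)(\rho_h\otimes\rho_h)\Delta_C(c) &=& h^2\otimes c_1\otimes c_2\\
&=& h\otimes c_1\otimes c_2\\
&=& (I_H\otimes\Delta_C)\rho_h(c),
\end{eqnarray*}
using (ii) in the second equality. Finally (CC3) is immediate from (iii):
$(I_H\otimes\rho_h)\rho_h(c)=h\otimes h\otimes c=\Delta_H(h)\otimes c=(\Delta_H\otimes I_C)\rho_h(c)$. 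Then Proposition \ref{caracrho} supplies (CC4) for free, so $\rho_h$ is indeed a coaction.

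There is no serious obstacle in this argument; the only mildly delicate point is the extraction of the element-level equations (i), (ii), (iii) from equalities living in tensor products such as $H\otimes C$ or $H\otimes H\otimes C$. This is handled cleanly by applying $\varepsilon_C$ to the appropriate $C$-factors and then invoking that a fixed nonzero vector in $C$ separates elements of $H$ (equivalently, $-\otimes c$ is injective on $H$ for $c\neq 0$ over a field).
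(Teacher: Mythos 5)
Your proposal is correct and is exactly the direct verification that the paper summarizes with ``the proof follows immediately from the definition'': each of (CC1)--(CC3) for $\rho_h$ reduces, after applying $\varepsilon_C$ to spare $C$-factors and using that $-\otimes c$ is injective for $c\neq 0$, to (i), (ii), (iii) respectively, and (CC4) comes from Proposition~\ref{caracrho}. The only implicit hypothesis, which you rightly flag, is $C\neq 0$ so that the element-level identities in $H$ can be extracted.
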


\begin{proof}
The proof follows immediately from the definition comodule coalgebra via $\rho_h$.
\end{proof}

Besides that, if $h \in H$ satisfies the properties of Proposition \ref{rho_h}, then $h = \varepsilon_t(h)$.

\begin{ex}
	Consider $\Bbbk \mathcal{G}$ the groupoid algebra generated by a groupoid $\mathcal{G}$. Thus, fixing an element $e \in \mathcal{G}_0$, 
$\rho_{\delta_e}$ ensure a structure of $\Bbbk \mathcal{G}$-comodule coalgebra on any coalgebra $C$ by Proposition \ref{rho_h}.
\end{ex}

Moreover, it is easy to see that $\rho_{\delta_g}$ gives a structure of $H$-comodule coalgebra on a coalgebra $C$ if and only if $g \in \mathcal{G}_0$.

\begin{ex}
	Consider $\Bbbk {G}$ the weak Hopf algebra given in Example \ref{exemplodogrupo}, $h$ an element in $G$ and $C$ a coalgebra. $C$ is a $\Bbbk {G}$-comodule coalgebra via $\rho_h$ if and only if $G=\{1_{G}\}$.
\end{ex}

Thinking on the partial case, we say that $C$ is a (left) \textit {partial $ H$-comodule coalgebra via $\overline {\rho_h}$}, for some fixed $h \in H $, if the linear map
\begin{eqnarray*}
	\overline{\rho_h}: C & \longrightarrow & H\otimes C\\
	c & \longmapsto & h \otimes c
\end{eqnarray*}
determines a structure of partial $H$-comodule coalgebra on $C$.

\begin{prop}\label{rho_hparcial}
	$C$ is a (left) partial $H$-comodule coalgebra via $\overline{\rho_h}$ if and only if
	\begin{itemize}
		\item [(i)] $\varepsilon_H(h)=1_{\Bbbk}$
		\item [(ii)] $(h \otimes 1_H)\Delta_H(h)=h \otimes h.$
	\end{itemize}
\end{prop}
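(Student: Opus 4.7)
The plan is a direct verification: substitute $\overline{\rho_h}(c)=h\otimes c$ into each of (CCP1)--(CCP3) and see what constraints on $h$ arise; then check that those constraints reduce exactly to (i) and (ii). The only subtle point will be that the equation forced by (CCP2) is not listed separately in the proposition, so we must derive it from (i) and (ii).

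First, (CCP1) applied to $\overline{\rho_h}$ gives $\varepsilon_H(h)\,c=c$ for all $c\in C$, which (assuming $C\neq 0$) is equivalent to $\varepsilon_H(h)=1_{\Bbbk}$, i.e.\ condition (i). Next, for (CCP2), the left-hand side is $h\otimes c_1\otimes c_2$, while unraveling the right-hand side gives $(m_H\otimes I_C\otimes I_C)(h\otimes h\otimes c_1\otimes c_2)=h^2\otimes c_1\otimes c_2$. Applying $I_H\otimes\varepsilon_C\otimes I_C$ on both sides and using $\varepsilon_C(c_1)c_2=c$ shows (CCP2) is equivalent to $h^2\otimes c=h\otimes c$ for all $c$, hence to $h^2=h$. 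Finally, for (CCP3), the left-hand side is $h\otimes h\otimes c$, and the right-hand side unfolds to
\[
(m_H\otimes I_H\otimes I_C)\bigl(\varepsilon_C(c_1)\,h\otimes h_1\otimes h_2\otimes c_2\bigr)=hh_1\otimes h_2\otimes c,
\]
so (CCP3) is equivalent to $(h\otimes 1_H)\Delta_H(h)=h\otimes h$, which is condition (ii).

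It remains to reconcile the apparently missing condition $h^2=h$ forced by (CCP2). The key observation is that $h^2=h$ follows from (i) and (ii): apply $I_H\otimes\varepsilon_H$ to both sides of $(h\otimes 1_H)\Delta_H(h)=h\otimes h$ and use the counit axiom $h_1\varepsilon_H(h_2)=h$ on the left and $\varepsilon_H(h)=1_{\Bbbk}$ on the right to obtain $h\cdot h = h\cdot 1_{\Bbbk}=h$. Hence (i) and (ii) together automatically imply (CCP2), and the two listed conditions are indeed necessary and sufficient.

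The main (and essentially only) obstacle is recognizing that (CCP2) does not introduce an independent constraint; the rest of the argument is a mechanical unwinding of the partial coaction axioms for the specific map $\overline{\rho_h}$, which is feasible because $\overline{\rho_h}$ is tensorially constant in $h$.
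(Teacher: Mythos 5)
Your verification is correct and matches the paper's (unwritten) intent: the paper declares the proof "immediate from the definition" and records separately, as a remark right after the statement, exactly your key observation that $h^2=h$ (the condition forced by (CCP2)) already follows from (i) and (ii) by applying $I_H\otimes\varepsilon_H$ to (ii). Your write-up simply fills in the mechanical details the paper omits, so there is nothing to reconcile.
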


Observe that if $h \in H$ satisfies the properties (i) and (ii), then $h^2=h$.

\begin{proof}
The proof follows immediately from the definition of partial $ H$-comodule coalgebra via $\overline {\rho_h}$.
\end{proof}

Note that $C$ is a symmetric partial $H$-comodule coalgebra via $\overline{\rho_h}$ if and only if
\begin{itemize}
	\item [(i)] $\varepsilon_H(h)=1_{\Bbbk}$
	\item [(ii)] $(h \otimes 1_H)\Delta_H(h)=h \otimes h$
	\item [(iii)] $\Delta_H(h)(h\otimes 1_H)=h \otimes h.$
\end{itemize}

\begin{obs}\label{obscarac}
If in addition $h \in H$ satisfies $h = \varepsilon_t(h)$, then $C$ is a $H$-comodule coalgebra.
\end{obs}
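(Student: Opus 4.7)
My plan is to obtain this statement as a one-line consequence of Proposition \ref{caraccomod}. That proposition characterises the partial $H$-comodule coalgebras which are already global by means of the identity $c^{\overline{-1}}\varepsilon_C(c^{\overline{0}}) = \varepsilon_t(c^{\overline{-1}})\varepsilon_C(c^{\overline{0}})$ for every $c \in C$. Specialising to the partial coaction $\overline{\rho_h}(c)=h\otimes c$, so that $c^{\overline{-1}}=h$ and $c^{\overline{0}}=c$, the criterion collapses to the identity
\[
h\,\varepsilon_C(c)\;=\;\varepsilon_t(h)\,\varepsilon_C(c)\qquad\text{for all } c\in C,
\]
which holds trivially under the extra hypothesis $h=\varepsilon_t(h)$. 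Proposition \ref{caraccomod} then yields at once that $C$ is an $H$-comodule coalgebra, and no further computation is required.

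As a sanity check, I would alternatively verify the three conditions of Proposition \ref{rho_h} by hand. Condition (i), $\varepsilon_H(h)=1_{\Bbbk}$, is part of the partial hypothesis, and condition (ii), $h^2=h$, was already observed to follow from (i)--(ii) of Proposition \ref{rho_hparcial}. For condition (iii), $\Delta_H(h)=h\otimes h$, I would use that $h\in H_t$ to invoke (\ref{4.10}) and rewrite $\Delta_H(h)=1_1 h\otimes 1_2$; then combining (ii) of Proposition \ref{rho_hparcial} with $h^2=h$ and the commutation $1_1h=h1_1$, which is valid by (\ref{4.7}) and (\ref{4.16}), the required equality drops out. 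I do not foresee any genuine obstacle in either route; the first is the cleanest and is the version I would actually include.
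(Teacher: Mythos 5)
Your first route is correct and is exactly the intended justification: the paper states this remark without proof, and it is an immediate specialisation of Proposition \ref{caraccomod}, since for $\overline{\rho_h}$ the criterion $c^{\overline{-1}}\varepsilon_C(c^{\overline{0}})=\varepsilon_t(c^{\overline{-1}})\varepsilon_C(c^{\overline{0}})$ reads $h\,\varepsilon_C(c)=\varepsilon_t(h)\,\varepsilon_C(c)$, which is trivial when $h=\varepsilon_t(h)$. Your sanity-check via Proposition \ref{rho_h} also goes through (from $\Delta_H(h)=1_1h\otimes 1_2$, $1_1h=h1_1$ and $h^2=h$ one gets $(h\otimes 1_H)\Delta_H(h)=\Delta_H(h)$, hence $\Delta_H(h)=h\otimes h$), but it is not needed.
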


If $\Bbbk {G}$ is the weak Hopf algebra given in Example \ref{exemplodogrupo} and $h$ is an element in $G$, then every parcial $\Bbbk {G}$-comodule coalgebra via $\rho_h$ is actually a (global) $\Bbbk {G}$-comodule coalgebra via $\rho_h$ thanks to Remark \ref{obscarac}. The following example was inspired by Example $3.2.3$ of \cite{glaubertese}, where $\Bbbk$ is seen as a partial $\Bbbk \mathcal{G}$-comodule algebra.

\begin{ex}
Consider $\Bbbk \mathcal{G}$ the groupoid algebra where the groupoid $\mathcal{G}$ is the disjoint union of the finites groups $G_1$ and $G_2$. Under these conditions, any coalgebra $C$ is a partial $\Bbbk \mathcal{G}$-comodule coalgebra via $\overline{\rho_h}$ where
	$h = \displaystyle \sum_{g \in G_1} \frac{1}{|G_1|}\delta_g.$
\end{ex}

We can also characterize the coaction of the weak Hopf algebra $\Bbbk \mathcal{G}$ on $\Bbbk$ when $\mathcal{G} $ a finite groupoid. 

\begin{ex}$\Bbbk$ is a partial $\Bbbk \mathcal{G}$-comodule coalgebra via $\overline{\rho_h}$ if and only if $h = \displaystyle \sum_{g \in N} \dfrac{1}{|N|}\delta_g$, for $N$ some group in $\mathcal{G}$.
\end{ex}

\begin{ex}
	$\Bbbk$ is a partial $(\Bbbk \mathcal{G})^*$-comodule coalgebra via $\overline{\rho_f}$ if and only if $f = \displaystyle \sum_{g \in N} p_g$, where $N$ is a group in $\mathcal{G}$.
\end{ex}

\subsection{Induced Coaction} \label{induzidacoacao}
\quad
Let $H$ be a weak Hopf algebra and $C$ a coalgebra. Suppose that $C$ is a $H$-comodule coalgebra via
\begin{eqnarray*}
	\rho: C & \longrightarrow & H\otimes C\\
	c & \longmapsto & c^{-1} \otimes c^0
\end{eqnarray*}
Our goal in this section is to construct a symmetric partial $H$-comodule coalgebra from a $H$-comodule coalgebra. For this, consider $D \subseteq C$ a subcoalgebra of $C$ such that there exists a projection $\pi:C \rightarrow C$ onto $D$, i.e.,
$\pi(\pi(c)) = \pi(c)$ for all $c \in C, $ and $Im \pi = D.$ Under these conditions, it can be obtained the following result.

\begin{prop} \label{coinduzida}
	$D$ is a symmetric partial $H$-comodule coalgebra via
	$$\begin{array}{rl}
	\overline{\rho}: D & \longrightarrow  H\otimes D\\
	d  & \longmapsto (I_H \otimes \pi)\rho(d)=d^{-1} \otimes \pi(d^0)
	\end{array}$$
	if and only if the projection $\pi$ satisfies:
	\begin{itemize}
		\item [(i)]$d^{-1} \otimes \Delta_D(\pi(d^0)) = d^{-1} \otimes (\pi \otimes \pi) (\Delta_D(d^0))$;
		\item [(ii)]$d^{-1} \otimes {\pi(d^0)}^{-1} \otimes \pi({\pi(d^0)}^{0}) = {d_1}^{-1} \varepsilon_D(\pi({d_1}^{0})) {{d_2}^{-1}}_1\otimes {{d_2}^{-1}}_2 \otimes \pi({{d_2}^{0}})\\
{\hspace{4,1 cm}}		={{d_1}^{-1}}_1{d_2}^{-1} \varepsilon_D(\pi({d_2}^{0})) \otimes {{d_1}^{-1}}_2 \otimes \pi({{d_1}^{0}})$;
		\end{itemize}
for all $d \in D$. In this case we say that $\overline{\rho}$ is an induced coaction.
\end{prop}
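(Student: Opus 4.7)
The plan is to verify each of the four defining identities for a symmetric partial $H$-comodule coalgebra -- namely (CCP1), (CCP2), (CCP3) and the symmetry axiom -- by direct substitution of $\overline{\rho}(d) = d^{-1} \otimes \pi(d^0)$, and to read off in each case the exact condition that the projection $\pi$ must satisfy. Since every reduction will be an equality of explicit tensors rather than a one-way implication, both directions of the stated ``if and only if'' follow simultaneously.

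First I would dispose of (CCP1) unconditionally: applying $\varepsilon_H \otimes I_D$ to $\overline{\rho}(d)$ yields $\varepsilon_H(d^{-1})\pi(d^0) = \pi(\varepsilon_H(d^{-1})d^0) = \pi(d) = d$, using (CC1) for $\rho$ together with the fact that $\pi$ restricts to the identity on $D$. For (CCP2), the left-hand side is $d^{-1} \otimes \Delta_D(\pi(d^0))$, while substituting into the right-hand side and invoking (CC2) for $\rho$ collapses it to $d^{-1} \otimes (\pi \otimes \pi)\Delta_C(d^0)$. Because $D$ is a subcoalgebra of $C$, $\Delta_D$ is just the restriction of $\Delta_C$, so (CCP2) becomes precisely condition (i).

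For (CCP3), the iterated composition $(I_H \otimes \overline{\rho})\overline{\rho}(d)$ unravels directly to $d^{-1} \otimes \pi(d^0)^{-1} \otimes \pi(\pi(d^0)^0)$, while the right-hand side of (CCP3), after substitution, becomes ${d_1}^{-1}\varepsilon_D(\pi({d_1}^{0})){({d_2}^{-1})}_1 \otimes {({d_2}^{-1})}_2 \otimes \pi({d_2}^{0})$. Equating these gives exactly the first equality of condition (ii). Running the symmetry axiom through the same substitution -- applying $I_H \otimes \tau_{H \otimes D, H}$ and then $m_H \otimes I_H \otimes I_D$ in the swapped order -- produces instead ${({d_1}^{-1})}_1 {d_2}^{-1}\varepsilon_D(\pi({d_2}^{0})) \otimes {({d_1}^{-1})}_2 \otimes \pi({d_1}^{0})$ on the right-hand side, matching the second equality of (ii).

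The main obstacle is purely notational: one must carefully track two independent Sweedler indexings -- the coproduct $\Delta_C$ applied to $d \in D$, and the coproduct $\Delta_H$ applied to the elements ${d_i}^{-1} \in H$ -- through the composition of $\overline{\rho}$ with itself, and be mindful that $\Delta_D$ only acts on elements of $D$ so that in condition (i) it is implicitly composed with $\pi$. Neither the antipode $S$ nor the maps $\varepsilon_s, \varepsilon_t$ are needed, and the coassociativity axiom (CC3) of the global coaction $\rho$ is never invoked -- condition (ii) is precisely what measures the failure of $\overline{\rho}$ to satisfy an analogous coassociativity on its own.
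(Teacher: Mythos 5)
Your proposal is correct and follows essentially the same route as the paper: substitute $\overline{\rho}=(I_H\otimes\pi)\rho$ into (CCP2), (CCP3) and the symmetry axiom, use (CC2) of the global coaction to identify the right-hand side of (CCP2) with $d^{-1}\otimes(\pi\otimes\pi)\Delta(d^0)$, and observe that the resulting identities are exactly (i) and the two equalities of (ii), so the equivalence holds in both directions (the paper writes out the forward direction and declares the converse immediate). Your added remarks that (CCP1) holds automatically and that (CC3) is never needed are accurate and consistent with the paper's computation.
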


\begin{proof}
	Suppose that $D$ is a symmetric partial $H$-comodule coalgebra via
	$$\overline{\rho}(c)= (I_H \otimes \pi)\rho(d)=d^{-1} \otimes \pi(d^0).$$
	Therefore,
\begin{eqnarray*}
			d^{-1} \otimes \Delta_D(\pi(d^0)) &=& (I_H \otimes \Delta_D)(\overline{\rho}(d))\\
			&\stackrel{(CCP2)}{=}& (m_H \otimes I_D \otimes I_D)(I_H \otimes \tau_{H , D} \otimes I_D)(\overline{\rho} \otimes \overline{\rho}) \Delta_D(d)\\
			&\stackrel{(CC2)}{=}&d^{-1} \otimes (\pi \otimes \pi) (\Delta_D(d^0)).
		\end{eqnarray*}
Besides that,	
 \begin{eqnarray*}
			d^{-1} \otimes {\pi(d^0)}^{-1} \otimes \pi({\pi(d^0)}^{0}) &=&
			(I_H \otimes \overline{\rho})\overline{\rho}(d)\\
			&\stackrel{(CCP3)}{=}&(m_H \otimes I_H \otimes I_D)[(I_H \otimes \varepsilon_D)(\overline{\rho}(d_1)) \otimes (\Delta_H \otimes I_D)(\overline{\rho}(d_2))]\\
			&=&{d_1}^{-1} \varepsilon_D(\pi({d_1}^{0})) {{d_2}^{-1}}_1\otimes {{d_2}^{-1}}_2 \otimes \pi({{d_2}^{0}}).
		\end{eqnarray*}
		
		Analogously, using the symmetry condition,
		\begin{eqnarray*}
			d^{-1} \otimes {\pi(d^0)}^{-1} \otimes \pi({\pi(d^0)}^{0}) &=&
			(I_H \otimes \overline{\rho})\overline{\rho}(d)\\
		&=&{{d_1}^{-1}}_1{d_2}^{-1} \varepsilon_D(\pi({d_2}^{0})) \otimes {{d_1}^{-1}}_2 \otimes \pi({{d_1}^{0}}).
		\end{eqnarray*}
The converse is immediate.
\end{proof}
 
Note that the induced coaction is a $H$-comodule coalgebra if and only if $\varepsilon_t(d^{-1})\varepsilon_D(\pi(d^0)) = d^{-1}\varepsilon_D(\pi(d^0))$ for all $d \in D$.

\begin{ex}
	Consider $G_1$ and $G_2$ finite groups, $\mathcal{G}$ the groupoid generated by the disjoint union of these groups and $\Bbbk \mathcal{G}$ its groupoid algebra. Define
	\begin{eqnarray*}
		\rho: \Bbbk G_1 & \longrightarrow &  ({\Bbbk \mathcal{G}})^* \otimes \Bbbk G_1 \\
		h & \longmapsto & \displaystyle \sum_{g \in G_1}  p_{g} \otimes hg
	\end{eqnarray*}
	with $\{g\}_{g \in G_1}$ basis for the Hopf algebra $\Bbbk G_1$ and $\{p_g\}_{g \in \mathcal{G}}$ the dual basis for the weak Hopf algebra $({\Bbbk \mathcal{G}})^*$.
	Thus, $\Bbbk G_1$ is a $({\Bbbk \mathcal{G}})^*$-comodule coalgebra. Define
	\begin{eqnarray*}
		\pi: \Bbbk {G_1} &\rightarrow& \Bbbk {G_1}\\
		g &\mapsto& \left\{
		\begin{array}{rl}
			g, & \text{if $g \in D$ },\\
			0, & \text{ otherwise, }
		\end{array} \right.
	\end{eqnarray*}
	where $D = <l>_{\Bbbk}$, for some $l$ fixed in $G_1$. Then, $D$ is a symmetric partial $({\Bbbk \mathcal{G}})^*$-comodule coalgebra by Proposition \ref{coinduzida}. Moreover, the induced coaction constructed is not global. Indeed, on the one hand,
\begin{eqnarray*}
	{h}^{-1}\varepsilon_D(\pi({h}^0))&=& \sum_{g \in G_1} p_g \varepsilon_D(\pi({hg}))\\
	&\stackrel{hg=l}{=}&p_{h^{-1}l}.
\end{eqnarray*}

On the other hand,
\begin{eqnarray*}
	{\varepsilon_t}_{_{(\Bbbk \mathcal{G})^*}}({h}^{-1})\varepsilon_D(\pi({h}^0))&=& \sum_{g \in G_1} {\varepsilon_t}_{_{\Bbbk \mathcal{G}^*}}(p_g) \varepsilon_D(\pi({hg}))\\
	&\stackrel{hg=l}{=}& {\varepsilon_t}_{_{(\Bbbk \mathcal{G})^*}}(p_{h^{-1}l}).
\end{eqnarray*}

Note that $p_{h^{-1}l} \neq {\varepsilon_t}_{_{(\Bbbk \mathcal{G})^*}}(p_{h^{-1}l})$. Therefore,  $D$ is not a global $(\Bbbk \mathcal{G})^*$-comodule coalgebra.

\end{ex}

\section{Weak Smash Coproduct}
\quad \
Yu. Wang and L. Yu. Zhang, in \cite{Wang}, introduced a new structure generated from a $H$- comodule coalgebra, the weak smash coproduct. Therefore, a natural question arises: ``\textit{Under what conditions the weak smash coproduct becomes a weak Hopf algebra?}" 

This section is destined to answer this question and to make a contribution to the existing theory. The weak smash coproduct was defined being the vector space $C \times H = <c^0 \otimes h_2 \varepsilon_H(c^{-1}h_1)>_{\Bbbk}$ that has a specific structure of coalgebra. Initially we start showing that this structure of coalgebra is inherited from some properties of the vector space $C \otimes H$.

\begin{prop}
	The vector space $C \otimes H$ has a coassociative coproduct  
	$$\Delta(c \otimes h)= c_1 \otimes {c_2}^{-1}h_1 \otimes {c_2}^{0} \otimes h_2.$$
	Moreover, if $C$ is an algebra, then $C \otimes H$ is an algebra with product given by
	$$(c \otimes h)(d \otimes k)= (cd \otimes hk),$$
	and unit
	$1_{(C \otimes H)}=1_{C} \otimes 1_H.$
\end{prop}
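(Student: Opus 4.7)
The proposition splits into two essentially independent parts. The algebra claim is routine: the product $(c \otimes h)(d \otimes k) = cd \otimes hk$ with unit $1_C \otimes 1_H$ is literally the tensor product algebra structure on $C \otimes H$, so associativity and unitality are inherited from the corresponding properties of $C$ and $H$ without any reference to the coaction $\rho$ or to any weak Hopf algebra axiom. I would dispose of this part in one sentence.

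For coassociativity of $\Delta(c \otimes h) = c_1 \otimes {c_2}^{-1} h_1 \otimes {c_2}^{0} \otimes h_2$, I would compute both sides of $(\Delta \otimes I \otimes I)\Delta(c \otimes h) = (I \otimes I \otimes \Delta)\Delta(c \otimes h)$ directly in Sweedler notation. Using the multiplicativity of $\Delta_H$ together with coassociativity of $\Delta_C$ and $\Delta_H$, the left-hand side unfolds to
$$c_1 \otimes {c_2}^{-1}({c_3}^{-1})_1 h_1 \otimes {c_2}^{0} \otimes ({c_3}^{-1})_2 h_2 \otimes {c_3}^{0} \otimes h_3,$$
while the right-hand side, by applying $\Delta$ to the last two slots, becomes
$$c_1 \otimes {c_2}^{-1} h_1 \otimes ({c_2}^{0})_1 \otimes (({c_2}^{0})_2)^{-1} h_2 \otimes (({c_2}^{0})_2)^{0} \otimes h_3.$$

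The crux is therefore the identity of four-tensors
$${c_2}^{-1} \otimes ({c_2}^{0})_1 \otimes (({c_2}^{0})_2)^{-1} \otimes (({c_2}^{0})_2)^{0} = {c_2}^{-1}({c_3}^{-1})_1 \otimes {c_2}^{0} \otimes ({c_3}^{-1})_2 \otimes {c_3}^{0},$$
which is where the comodule coalgebra axioms enter. I would first apply (CC2) to $c_2$: this replaces ${c_2}^{-1} \otimes ({c_2}^{0})_1 \otimes ({c_2}^{0})_2$ by ${c_2}^{-1}{c_3}^{-1} \otimes {c_2}^{0} \otimes {c_3}^{0}$ (renaming via coassociativity of $\Delta_C$ so that the original $c_2$ is split into $c_2, c_3$), and the extra coaction sitting on the third slot becomes a coaction on ${c_3}^{0}$. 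Then I would apply (CC3) to $c_3$, replacing ${c_3}^{-1} \otimes ({c_3}^{0})^{-1} \otimes ({c_3}^{0})^{0}$ by $({c_3}^{-1})_1 \otimes ({c_3}^{-1})_2 \otimes {c_3}^{0}$. After these two substitutions the two four-tensors agree slot by slot, which yields coassociativity.

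The only real obstacle is bookkeeping: three Sweedler conventions (on $C$, on $H$, and on the pair $C \otimes H$) run simultaneously, and one must keep careful track of which slots belong to which coaction and which copies of $\Delta_C$, $\Delta_H$ are being expanded. Once this is handled correctly, no further weak Hopf algebra axiom is needed beyond (CC2), (CC3), and the multiplicativity of $\Delta_H$; in particular the antipode plays no role, so the statement actually lives already at the level of weak bialgebras.
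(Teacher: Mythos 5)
Your argument is correct and is essentially the paper's own proof: the paper likewise expands $(I\otimes\Delta)\Delta(c\otimes h)$, applies (CC2) to split $c_2$ into $c_2,c_3$ and then (CC3) to $c_3$, landing on $c_1 \otimes {c_2}^{-1}({c_3}^{-1})_1h_1 \otimes {c_2}^{0} \otimes ({c_3}^{-1})_2h_2 \otimes {c_3}^{0}\otimes h_3 = (\Delta\otimes I)\Delta(c\otimes h)$, with the algebra part dismissed as routine. Your closing observation that only (CC2), (CC3), multiplicativity of $\Delta_H$, and the coassociativity of $\Delta_C$ and $\Delta_H$ are used (so no antipode is needed) is also accurate.
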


\begin{proof}Indeed for all $c \in C$ and $h \in H$
	\begin{eqnarray*}
		(I \otimes \Delta)\Delta(c \otimes h)
		&=&c_1 \otimes {c_2}^{-1}h_1 \otimes {{c_2}^{0}}_1 \otimes {{{c_2}^{0}}_2}^{-1} h_2 \otimes  {{{c_2}^{0}}_2}^{0} \otimes h_3\\
		&\stackrel{(CC2)}{=}&c_1 \otimes {c_2}^{-1}{c_3}^{-1}h_1 \otimes {{c_2}^{0}} \otimes {{{c_3}^{0-1}}} h_2 \otimes  {{{c_3}^{00}}} \otimes h_3\\
		&\stackrel{(CC3)}{=}&c_1 \otimes {c_2}^{-1}{{c_3}^{-1}}_1h_1 \otimes {{c_2}^{0}} \otimes {{c_3}^{-1}}_2 h_2 \otimes  {{{c_3}^{0}}} \otimes h_3\\
		&=&(\Delta \otimes I)\Delta(c \otimes h).
	\end{eqnarray*}	
	
The properties of associativity and unit follow naturally.\end{proof}

%\begin{defn}  \cite{Wang} \label{cxh}
%Let $H$ a weak Hopf algebra and $C$ a $H$-comodule coalgebra. Then, the \textit{weak smash coproduct} is defined as
%	$$C \times H = \{c^0 \otimes h_2 \varepsilon(c^{-1}h_1)\}.$$
%\end{defn}
%The vector space $C \times H = \{c^0 \otimes h_2 \varepsilon(c^{-1}h_1)\}$ can be seen as a vector subspace of $C \otimes H$, with coproduct and counity given by
%$$\Delta(c \times h) = c_1 \times {c_2}^{-1}h_1 \otimes {c_2}^{0} \times h_2,$$ 	
%$$\varepsilon(c \times h)= \varepsilon(c^{0})\varepsilon(c^{-1}h).$$

\begin{prop}
Let $H$ a weak Hopf algebra and $C$ a $H$-comodule coalgebra. Then, the vector space $C \times H = \ \  <c^0 \otimes h_2 \varepsilon_H(c^{-1}h_1)>_{\Bbbk}$ is a coalgebra with counit $\varepsilon(c \times h)= \varepsilon_C(c^{0})\varepsilon_H(c^{-1}h).$ \end{prop}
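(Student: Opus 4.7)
My plan is to realize $C \times H$ as the image of the linear projection $\pi \colon C \otimes H \to C \otimes H$ defined by $\pi(c \otimes h) := c^0 \otimes h_2\,\varepsilon_H(c^{-1}h_1)$, and to transport the coalgebra structure on $C \otimes H$ from the preceding proposition along $\pi$. The preliminary verifications are that $\pi^2 = \pi$ and that the extension $\tilde\varepsilon(c \otimes h) := \varepsilon_C(c^0)\varepsilon_H(c^{-1}h)$ of the proposed counit to all of $C \otimes H$ satisfies $\tilde\varepsilon \circ \pi = \tilde\varepsilon$. In each case, (CC3) unfolds the iterated coaction arising from the second application, and then the coalgebra counit identity $\varepsilon_H(x_1)\varepsilon_H(x_2) = \varepsilon_H(x)$ together with the multiplicativity of $\Delta_H$ collapses the extra $\varepsilon_H$-factor back to $\varepsilon_H(c^{-1}h_1)$.

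The heart of the proof is the intertwining $(\pi \otimes \pi) \circ \Delta = \Delta \circ \pi$, which forces $\Delta$ to restrict to a map $C \times H \to (C \times H)^{\otimes 2}$. On the $\Delta \circ \pi$ side, expanding $\Delta(c \times h) = \Delta(c^0 \otimes h_2)\,\varepsilon_H(c^{-1}h_1)$ and using (CC2) to replace $c^{-1} \otimes \Delta_C(c^0)$ by $c_1^{-1}c_2^{-1} \otimes c_1^0 \otimes c_2^0$, followed by (CC3) on $c_2$, produces the compact 4-tensor
\[
c_1^0 \otimes (c_2^{-1})_2 (h_2)_1 \otimes c_2^0 \otimes (h_2)_2 \cdot \varepsilon_H(c_1^{-1}(c_2^{-1})_1 h_1).
\]
The $(\pi \otimes \pi) \circ \Delta$ side expands to a more elaborate 4-tensor involving the extra factors $(c_2^0)^{-1}, (c_2^0)^0$ and an auxiliary $\varepsilon_H((c_2^0)^{-1}(h_2)_1)$. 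To match the two, apply $(I \otimes \Delta_H \otimes I)$ to both sides of (CC3) to rewrite the four $c_2$-factors as $(c_2^{-1})_1 \otimes ((c_2^{-1})_2)_1 \otimes ((c_2^{-1})_2)_2 \otimes c_2^0$; then use multiplicativity of $\Delta_H$ to recognize the three products $(c_2^{-1})_{[i]}(h_1)_{[i]}$ as the components of $\Delta^2(c_2^{-1}h_1)$; finally, collapse the auxiliary $\varepsilon_H$-factor via the counit identity $z_{[2]}\varepsilon_H(z_{[3]}) = z_2$ with $z = c_2^{-1}h_1$. Coassociativity of $\Delta_H$ reconciles the resulting split-first and split-second forms of $\Delta^2 h$.

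Once the intertwining is established, coassociativity of $\Delta$ on $C \times H$ is inherited from the preceding proposition, and the counit axiom $(\varepsilon \otimes I)\Delta(c \times h) = c \times h$ follows by a routine calculation: one writes the left-hand side as $\sum \varepsilon_C(c_1^0)\varepsilon_H(c_1^{-1}c_2^{-1}h_1)(c_2^0 \times h_2)$, applies the identity $\sum \varepsilon_C(c_1^0)\,c_1^{-1}c_2^{-1} \otimes c_2^0 = c^{-1} \otimes c^0$ (obtained from (CC2) by applying $\varepsilon_C$ to the middle factor of $\Delta_C(c^0)$) to reduce to $\sum \varepsilon_H(c^{-1}h_1)(c^0 \times h_2)$, and then invokes (CC3) together with $\varepsilon_H(x_1)\varepsilon_H(x_2) = \varepsilon_H(x)$ to recover $c \times h$; the right counit axiom is symmetric. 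The main obstacle is the intertwining $(\pi \otimes \pi)\Delta = \Delta\pi$, which demands careful iteration of (CC3) and bookkeeping of Sweedler indices across $\Delta^2$ and $\Delta^3$ combined with coassociativity of $\Delta_H$ to align the various decompositions of $\Delta^2 h$.
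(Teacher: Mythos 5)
Your overall strategy is sound and is essentially the paper's own argument in different clothing: the paper's computation that $\Delta(c \times h) = c_1 \times c_2^{-1}h_1 \otimes c_2^{0} \times h_2$ is precisely your intertwining identity $(\pi \otimes \pi)\circ\Delta = \Delta\circ\pi$, and your sketch of that computation (the (CC2)/(CC3) reduction to the compact $4$-tensor, the insertion and collapse of the auxiliary $\varepsilon_H$-factor) matches what the paper does line by line. Your verification of the left counit axiom is also correct and coincides with the paper's.

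The genuine gap is the final sentence ``the right counit axiom is symmetric.'' It is not, and this is exactly the point where the hypotheses of the proposition do real work. The definition of $\pi$ is intrinsically asymmetric: the scalar $\varepsilon_H(c^{-1}h_1)$ pairs $c^{-1}$ with the \emph{first} leg of $\Delta_H(h)$. Running your left-counit argument on the other side, one reduces $(I \otimes \varepsilon)\Delta(c \times h)$ to
\[
c_1^{0} \otimes (c_2^{-1})_2\, h_2\;\varepsilon_H\bigl(c_1^{-1}(c_2^{-1})_1 h_1\bigr)\,\varepsilon_C(c_2^{0}),
\]
and at this stage the counit property of $\varepsilon_H$ alone cannot absorb the dangling factor $\varepsilon_C(c_2^{0})\,(c_2^{-1})_1 \otimes (c_2^{-1})_2$: one must invoke (CC4) (which, since $H$ is a weak Hopf algebra, follows from (CC1)--(CC3) by Proposition \ref{caracrho}) to replace $c_2^{-1}\varepsilon_C(c_2^{0})$ by $\varepsilon_t(c_2^{-1})\varepsilon_C(c_2^{0})$, and then the target-map identities --- \eqref{4.12} or \eqref{4.10}, the commutation \eqref{4.16} of $H_t$ with $H_s$, and in the paper's version also \eqref{4.39} and \eqref{4.34}, which involve the antipode --- to reassemble $\varepsilon_H(c^{-1}h_1)\,c^{0}\otimes h_2$. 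None of this machinery appears in the left-counit check, so the two verifications are not interchangeable by any symmetry; if $H$ were only a weak bialgebra and (CC4) were dropped, the right counit axiom could fail. You need to carry out this computation explicitly (the paper devotes roughly a dozen lines to it), making the appeal to (CC4) and to the $\varepsilon_t$-identities visible.
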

\begin{proof}
	Indeed for all $c \in C$ and $h \in H$
	\begin{eqnarray*}
	\Delta(c \times h)&=& \Delta(c^{0} \otimes h_2 \varepsilon_H({c^{-1}}h_1))\\
	&=&{c^0}_1 \otimes {{c^0}_2}^{-1}{h_2}\varepsilon_H({c}^{-1} h_1) \otimes {{c^0}_2}^{0} \otimes {h_3} \\
	&\stackrel{(CC2)}{=}&{c_1}^0 \otimes {{c_2}^{0-1}}{h_2}\varepsilon_H({c_1}^{-1} {{c_2}^{-1}} h_1) \otimes {c_2}^{00} \otimes {h_3} \\
	&\stackrel{(CC3)}{=}&{c_1}^0 \otimes {{c_2}^{-1}}_2{h_2}\varepsilon_H({c_1}^{-1} {{c_2}^{-1}}_1 h_1) \otimes {c_2}^{0} \otimes {h_3} \\
	&=&{c_1}^0 \otimes {({{c_2}^{-1}}_1)}_2{h_2} \varepsilon_H({c_1}^{-1} {({{c_2}^{-1}}_1)}_1 h_1) \otimes {c_2}^{0} \otimes {h_4} \varepsilon_H({{c_2}^{-1}}_2{h_3})\\
	&\stackrel{(CC3)}{=}&{c_1}^0 \otimes {{c_2}^{-1}}_2{h_2} \varepsilon_H({c_1}^{-1} {{c_2}^{-1}}_1 h_1) \otimes {c_2}^{00} \otimes {h_4} \varepsilon_H({c_2}^{0-1}{h_3})\\
	&=& c_1 \times {c_2}^{-1}h_1 \otimes {c_2}^{0} \times h_2.
	\end{eqnarray*}
	Besided that, 
	$C \times H$ is counitary since
	\begin{eqnarray*}
		(\varepsilon \otimes I)\Delta(c \times h)&=&(\varepsilon \otimes I)(\Delta(c^{0} \otimes h_2 \varepsilon_H({c^{-1}}h_1)))\\
		&=&(\varepsilon \otimes I)({c^0}_1 \otimes {{c^0}_2}^{-1}{h_2}\varepsilon_H({c}^{-1} h_1) \otimes {{c^0}_2}^{0} \otimes {h_3})\\
		&=&\varepsilon_C({c^0}_1) \varepsilon_H({{c^0}_2}^{-1}{h_2})\varepsilon_H({c}^{-1} h_1) ({{c^0}_2}^{0} \otimes {h_3})\\
		&=&\varepsilon_C(c^{0-1}{h_2})\varepsilon_H({c}^{-1} h_1) (c^{00} \otimes {h_3})\\
		&\stackrel{(CC3)}{=}&\varepsilon_H({c^{-1}}_2{h_2})\varepsilon_H({c^{-1}}_1 h_1) (c^{0} \otimes {h_3})\\
		&=&\varepsilon_H({c^{-1}} h_1) (c^{0} \otimes {h_2})\\
		&=&c \times h.
	\end{eqnarray*}
	
	Similarly,
	\begin{eqnarray*}
		(I \otimes \varepsilon)\Delta(c \times h)
		&=&({c^0}_1 \otimes {{c^0}_2}^{-1}{h_2})\varepsilon_H({c}^{-1} h_1) \varepsilon_C({{c^0}_2}^{0}) \varepsilon_H({h_3})\\
	&=&({c^0}_1 \otimes {{c^0}_2}^{-1}{h_2})\varepsilon_H({c}^{-1} h_1) \varepsilon_C({{c^0}_2}^{0}) \\
		&\stackrel{(CC2)}{=}&({c_1}^0 \otimes {c_2}^{0-1}{h_2})\varepsilon_H({c_1}^{-1}{c_2}^{-1} h_1) \varepsilon_C({c_2}^{00}) \\
		&\stackrel{(CC3)}{=}&({c_1}^0 \otimes {{c_2}^{-1}}_2{h_2})\varepsilon_H({c_1}^{-1}{{c_2}^{-1}}_1 h_1) \varepsilon_C({c_2}^{0}) \\
		&{=}&({c_1}^0 \otimes {\varepsilon_t({c_2}^{-1}})_2{h})\varepsilon_H({c_1}^{-1}\varepsilon_t({{c_2}^{-1}})_1)  \varepsilon_C({c_2}^{0}) \\
		&\stackrel{(\ref{4.10})}{=}&({c_1}^0 \otimes {1_H}_2{h})\varepsilon_H({c_1}^{-1}{1_H}_1\varepsilon_t({c_2}^{-1}))  \varepsilon_C({c_2}^{0}) \\
		&\stackrel{(\ref{4.16})}{=}&({c_1}^0 \otimes {1_H}_2{h})\varepsilon_H({c_1}^{-1}\varepsilon_t({c_2}^{-1}){1_H}_1)\varepsilon_C({c_2}^{0}) \\
		&\stackrel{(\ref{4.7})}{=}&({c^0} \otimes {1_H}_2{h})\varepsilon_H({c}^{-1}\varepsilon_s({1_H}_1))  \\
		&\stackrel{(\ref{4.5})}{=}&({c^0} \otimes {1_H}_2{h})\varepsilon_H({c}^{-1} \varepsilon_t(\varepsilon_s({1_H}_1)))  \\
		&\stackrel{(\ref{4.34})}{=}&({c^0} \otimes {1_H}_2{h})\varepsilon_H({c}^{-1} \varepsilon_t(S_H({1_H}_1)))  \\
		&\stackrel{(\ref{4.5})}{=}&({c^0} \otimes {1_H}_2{h})\varepsilon_H({c}^{-1} S_H({1_H}_1))  \\
		&\stackrel{(\ref{4.39})}{=}&({c^0} \otimes h_2)\varepsilon_H({c}^{-1} \varepsilon_t(h_1))  \\
		&\stackrel{(\ref{4.5})}{=}&({c^0} \otimes h_2)\varepsilon_H({c}^{-1}h_1)  \\
		&=&c \times h.
	\end{eqnarray*}

	Therefore, $C \times H = <c^0 \otimes h_2 \varepsilon_H(c^{-1}h_1)>_{\Bbbk}$ is a coalgebra.
\end{proof}

\begin{defn}
Let $C$ a weak bialgebra and $H$ a weak Hopf algebra. We say that $C$ is a (left) $H$-\textit{comodule bialgebra} if there exists a linear map $\rho: C \rightarrow H \otimes C$ such that $\rho$ defines simultaneously a structure of (left) $H$-comodule coalgebra and (left) $H$-comodule algebra in $C$.
\end{defn}

From now, suppose that $C$ is a $H$-comodule bialgebra (then $C$ is a weak bialgebra) where $H$ is a commutative weak Hopf algebra. It follows from the commutativity of $H$ that $\varepsilon_t$ and $\varepsilon_s$ are multiplicatives. Moreover, S. Caenepeel and E. De Groot showed in \cite{Caenepeel} that
\begin{eqnarray}
\varepsilon_t \circ \overline{\varepsilon_t} &=& \varepsilon_t\\
\varepsilon_s \circ \overline{\varepsilon_s} &=& \varepsilon_s
\end{eqnarray}
where $\overline{\varepsilon_t}(h) = {1_H}_1 \varepsilon_H({1_H}_2 h)$ and $\overline{\varepsilon_s}(h) = {1_H}_2 \varepsilon_H(h{1_H}_1)$ for all $h \in H$. Since $H$ is commutative
\begin{eqnarray*}
	\overline{\varepsilon_t}(h) &=& {1_H}_1 \varepsilon_H({1_H}_2 h)\\
	&=& {1_H}_1 \varepsilon_H(h{1_H}_2)\\
	&=&\varepsilon_s(h).
\end{eqnarray*}

And, analogously $\overline{\varepsilon_s}(h) =\varepsilon_t(h).$ Therefore, 
\begin{eqnarray}
\varepsilon_t \circ \varepsilon_s &=& \varepsilon_t \label{epsilon1}\\
\varepsilon_s \circ \varepsilon_t &=& \varepsilon_s \label{epsilon2}.
\end{eqnarray}

These properties for $\varepsilon_t$ and $\varepsilon_s$ will be commonly used throughout this section.

\begin{lema} \label{produto}
The product in $C \otimes H$ induces a product in $C \times H$ given by
	$(c \times h)(b \times k) = (cb \times hk),$
	for all $c \in C$ and $h \in H$.
\end{lema}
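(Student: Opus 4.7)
The plan is to interpret $c \times h$ literally as the element $c^0 \otimes h_2\,\varepsilon_H(c^{-1}h_1)$ of $C\otimes H$ and compute $(c\times h)(b\times k)$ directly in the tensor product algebra $C\otimes H$, then show the result equals the element $cb\times hk$. Doing so simultaneously establishes that the product restricts to the subspace $C\times H$ and that it is given by the stated formula.

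First I would write out, using the algebra structure of $C\otimes H$,
\[
(c\times h)(b\times k) = c^0 b^0 \otimes h_2 k_2\,\varepsilon_H(c^{-1}h_1)\,\varepsilon_H(b^{-1}k_1).
\]
The key move is to collapse the two separate counit factors via identity \eqref{4.14}: $h\,\varepsilon_t(k) = \varepsilon_H(h_1 k)\,h_2$. Applied with $k = c^{-1}$, and using commutativity of $H$ (so that $h_1 c^{-1} = c^{-1}h_1$), it gives $\varepsilon_H(c^{-1}h_1)\,h_2 = h\,\varepsilon_t(c^{-1})$, and symmetrically $\varepsilon_H(b^{-1}k_1)\,k_2 = k\,\varepsilon_t(b^{-1})$. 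After this substitution, commutativity of $H$ (to slide $\varepsilon_t(c^{-1})$ past $k$) together with the multiplicativity of $\varepsilon_t$ on the commutative weak Hopf algebra $H$ (noted just before the lemma) yields
\[
(c\times h)(b\times k) = c^0 b^0 \otimes hk\,\varepsilon_t(c^{-1}b^{-1}).
\]

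For the right-hand side, I would expand $cb\times hk = (cb)^0 \otimes (hk)_2\,\varepsilon_H((cb)^{-1}(hk)_1)$. Since $C$ is an $H$-comodule bialgebra, the coaction $\rho$ is in particular multiplicative, whence $(cb)^{-1}\otimes (cb)^0 = c^{-1}b^{-1}\otimes c^0 b^0$; multiplicativity of $\Delta_H$ gives $(hk)_1\otimes (hk)_2 = h_1 k_1\otimes h_2 k_2$. Applying \eqref{4.14} once more, this time with $h\mapsto hk$ and $k\mapsto c^{-1}b^{-1}$ and again using commutativity of $H$, I get $\varepsilon_H(c^{-1}b^{-1}(hk)_1)(hk)_2 = hk\,\varepsilon_t(c^{-1}b^{-1})$, so that $cb\times hk = c^0 b^0 \otimes hk\,\varepsilon_t(c^{-1}b^{-1})$. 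Matching the two expressions finishes the proof.

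The only nontrivial step is recognizing \eqref{4.14} as the tool that absorbs the factor $\varepsilon_H(c^{-1}h_1)$ into the rewriting $h\,\varepsilon_t(c^{-1})$; without it, a direct attempt to equate the scalars $\varepsilon_H(c^{-1}h_1)\varepsilon_H(b^{-1}k_1)$ and $\varepsilon_H(c^{-1}b^{-1}h_1 k_1)$ through the weak bialgebra axiom $\varepsilon(xyz)=\varepsilon(xy_1)\varepsilon(y_2 z)$ forces an awkward splitting of one of the coproducts. Both assumptions, commutativity of $H$ (so that $\varepsilon_t$ is multiplicative and factors can be reordered) and $C$ being an $H$-comodule bialgebra (so that $\rho$ is an algebra map), are essential to this argument.
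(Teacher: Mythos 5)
Your proof is correct, but it takes a genuinely shorter route than the paper's. Your key step is to put each generator into the normal form $c\times h = c^{0}\otimes h\,\varepsilon_t(c^{-1})$ by a single application of \eqref{4.14} together with the commutativity of $H$; once both factors are in this form, the claim reduces to the multiplicativity of $\varepsilon_t$ (valid because $H$ is commutative) and the multiplicativity of $\rho$ (valid because $C$ is an $H$-comodule bialgebra). The paper instead reaches an equivalent normal form $c^{0}\otimes 1_2 h\,\varepsilon_H(c^{-1}1_1)$ by a longer chain: it inserts $\varepsilon_t$ via \eqref{4.5}, trades $\varepsilon_t(h_1)\otimes h_2$ for $S_H({1_H}_1)\otimes {1_H}_2 h$ via \eqref{4.39}, strips the antipode back off using \eqref{4.5}, \eqref{4.34} and \eqref{4.7}, merges the two copies of $\Delta(1_H)$ using commutativity and the fact that $\Delta(1_H)\in H_s\otimes H_t$, and then runs the whole chain backwards to reassemble $(hk)_2\,\varepsilon_H((cb)^{-1}(hk)_1)$. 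The two normal forms agree, since ${1_H}_2\,\varepsilon_H(c^{-1}{1_H}_1)=\overline{\varepsilon_s}(c^{-1})=\varepsilon_t(c^{-1})$ in the commutative case, so the proofs are reconcilable; yours simply isolates \eqref{4.14} as the one identity that does all the work, at the cost of invoking commutativity slightly earlier (to pass from $\varepsilon_H(c^{-1}h_1)$ to $\varepsilon_H(h_1c^{-1})$ before applying \eqref{4.14}). Both arguments use exactly the same hypotheses, and your observation that the statement amounts to closure of $C\times H$ under the componentwise product of $C\otimes H$ is also the paper's implicit reading of the lemma.
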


\begin{proof}
	Let $c, b \in C$ and $h,k \in H$. Then,
	\begin{eqnarray*}
		(c \times h)(b \times k)&=&(c^{0} \otimes h_2 \varepsilon_H(c^{-1}h_1))(b^{0} \otimes k_2 \varepsilon_H(b^{-1}k_1))  \\
		&\stackrel{(\ref{4.5})}{=}& c^{0}b^{0} \otimes h_2 k_2 \varepsilon_H(c^{-1}\varepsilon_t(h_1))\varepsilon_H(b^{-1}\varepsilon_t(k_1))  \\
		&\stackrel{(\ref{4.39})}{=}& c^{0}b^{0} \otimes {1_H}_2 h {1_H}_{2'}k \varepsilon_H(c^{-1}S_H({1_H}_1))\varepsilon_H(b^{-1}S_H({1_H}_{1'}))  \\
		&\stackrel{(\ref{4.5})}{=}& c^{0}b^{0} \otimes {1_H}_2 h {1_H}_{2'}k \varepsilon_H(c^{-1}\varepsilon_t(S_H({1_H}_1)))\varepsilon_H(b^{-1}\varepsilon_t(S_H({1_H}_{1'})))  \\
		&\stackrel{(\ref{4.34})}{=}& c^{0}b^{0} \otimes {1_H}_2 h {1_H}_{2'}k \varepsilon_H(c^{-1}\varepsilon_t(\varepsilon_s({1_H}_1)))\varepsilon_H(b^{-1}\varepsilon_t(\varepsilon_s({1_H}_{1'})))  \\
		&\stackrel{(\ref{4.7})}{=}& c^{0}b^{0} \otimes {1_H}_2 h {1_H}_{2'}k \varepsilon_H(c^{-1}\varepsilon_t({1_H}_1))\varepsilon_H(b^{-1}\varepsilon_t({1_H}_{1'}))  \\
		&\stackrel{(\ref{4.5})}{=}& c^{0}b^{0} \otimes {1_H}_2 h {1_H}_{2'}k \varepsilon_H(c^{-1}{1_H}_1)\varepsilon_H(b^{-1}{1_H}_{1'})  \\
		&\stackrel{(\ref{4.7})}{=}&  {(cb)}^{0} \otimes {1_H}_2\varepsilon_H(\varepsilon_s({1_H}_1){(cb)}^{-1}) h k   \\
		&\stackrel{(\ref{4.5})}{=}&  {(cb)}^{0} \otimes {1_H}_2\varepsilon_H({(cb)}^{-1}\varepsilon_t(\varepsilon_s({1_H}_1))) h k   \\
		&\stackrel{(\ref{4.34})}{=}&  {(cb)}^{0} \otimes {1_H}_2\varepsilon_H({(cb)}^{-1}\varepsilon_t(S_H({1_H}_1))) h k   \\
		&\stackrel{(\ref{4.5})}{=}&  {(cb)}^{0} \otimes {1_H}_2\varepsilon_H({(cb)}^{-1}S_H({1_H}_1)) h k   \\
		&\stackrel{(\ref{4.39})}{=}&  {(cb)}^{0} \otimes {(hk)}_2 \varepsilon_H({(cb)}^{-1}\varepsilon_t({(hk)}_1))   \\
		&=& (cb \times hk).
	\end{eqnarray*}
\end{proof}

Note that the associativity of the algebras $C$ and $H$ guarantee the associativity of $C \times H$. We also have that $1_{C \times H} = 1_C \times 1_H$.

For the next result consider $C \times H$ with the product given in Lemma \ref{produto} and the coproduct given by $\Delta(c \times h) = c_1 \times {c_2}^{-1}h_1 \otimes {c_2}^{0} \times h_2$ for all $c \in C$ and $h \in H$.

\begin{lema} \label{delta}
	Under these conditions, the coproduct defined on ${C \times H}$ is multiplicative.	
\end{lema}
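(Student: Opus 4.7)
The plan is to compute both sides of $\Delta\bigl((c \times h)(b \times k)\bigr) = \Delta(c \times h)\,\Delta(b \times k)$ directly from the definitions and show they coincide, the only nontrivial input being the commutativity of $H$.

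First I would expand the left-hand side. By Lemma \ref{produto} we have $(c \times h)(b \times k) = cb \times hk$, so applying the definition of $\Delta$ gives
\[
\Delta\bigl((c \times h)(b \times k)\bigr) = (cb)_1 \times (cb)_2{}^{-1}(hk)_1 \otimes (cb)_2{}^{0} \times (hk)_2.
\]
Now I would invoke three compatibility properties in sequence: that $\Delta_C$ is multiplicative (since $C$ is a weak bialgebra), so $(cb)_1 \otimes (cb)_2 = c_1 b_1 \otimes c_2 b_2$; that $\Delta_H$ is multiplicative, so $(hk)_1 \otimes (hk)_2 = h_1 k_1 \otimes h_2 k_2$; and that $\rho$ is an algebra map (since $C$ is an $H$-comodule bialgebra), so $(c_2 b_2)^{-1} \otimes (c_2 b_2)^{0} = c_2{}^{-1} b_2{}^{-1} \otimes c_2{}^{0} b_2{}^{0}$. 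Substituting these yields
\[
\Delta\bigl((c \times h)(b \times k)\bigr) = c_1 b_1 \times c_2{}^{-1} b_2{}^{-1} h_1 k_1 \otimes c_2{}^{0} b_2{}^{0} \times h_2 k_2.
\]

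Next I would expand the right-hand side. Applying $\Delta$ to each factor and then multiplying componentwise using Lemma \ref{produto} gives
\[
\Delta(c \times h)\,\Delta(b \times k) = \bigl(c_1 \times c_2{}^{-1} h_1\bigr)\bigl(b_1 \times b_2{}^{-1} k_1\bigr) \otimes \bigl(c_2{}^{0} \times h_2\bigr)\bigl(b_2{}^{0} \times k_2\bigr),
\]
which in turn equals
\[
c_1 b_1 \times c_2{}^{-1} h_1 b_2{}^{-1} k_1 \otimes c_2{}^{0} b_2{}^{0} \times h_2 k_2.
\]

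Comparing with the expression for the left-hand side, equality reduces to the identity $c_2{}^{-1} b_2{}^{-1} h_1 k_1 = c_2{}^{-1} h_1 b_2{}^{-1} k_1$ in $H$, which holds because $H$ is commutative, so $b_2{}^{-1}$ commutes with $h_1$. No real obstacle arises: the argument is essentially a Sweedler-index bookkeeping exercise, and the key algebraic ingredient is precisely the commutativity hypothesis on $H$ (together with the fact that $\rho$ is simultaneously a coaction and an algebra map). The only point that requires minor care is making sure that, when passing between the representative notation $c \times h$ and the underlying element $c^{0} \otimes h_2 \varepsilon_H(c^{-1}h_1)$, the manipulations respect the subspace structure; but Lemma \ref{produto} and the proof of the coalgebra structure in the preceding proposition have already settled that the operations are well-defined on $C \times H$.
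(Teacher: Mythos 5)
Your proof is correct and follows essentially the same route as the paper: reduce to $\Delta(cb\times hk)$ via Lemma \ref{produto}, use multiplicativity of $\Delta_C$, $\Delta_H$ and of $\rho$, and recombine the factors. The only difference is cosmetic — you make explicit the use of commutativity of $H$ to swap $b_2{}^{-1}$ past $h_1$, a step the paper leaves implicit in its regrouping.
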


\begin{proof}
	
	Given $b, c \in C$ and $h, k \in H$
	\begin{eqnarray*}
		\Delta((c \times h)(b \times k))&=& \Delta (cb \times hk)\\
		&=&{(cb)}_1 \times {{(cb)}_2}^{-1}{(hk)}_1 \otimes {{(cb)}_2}^{0} \times {(hk)}_2 \\
		&=&{c}_1{b}_1 \times {({c}_2{b}_2)}^{-1}{h}_1{k}_1 \otimes {({c}_2{b}_2)}^{0} \times {h}_2{k}_2 \\
		&{=}&{c}_1{b}_1 \times {{c}_2}^{-1}{{b}_2}^{-1}{h}_1{k}_1 \otimes {{c}_2}^{0}{{b}_2}^{0} \times {h}_2{k}_2 \\
		&=&({c}_1  \times {{c}_2}^{-1}{h}_1)({b}_1 \times {{b}_2}^{-1}{k}_1) \otimes ({{c}_2}^{0} \times {h}_2)({{b}_2}^{0} \times {k}_2) \\
		&=&[({c}_1  \times {{c}_2}^{-1}{h}_1) \otimes ({{c}_2}^{0} \times {h}_2)][({b}_1 \times {{b}_2}^{-1}{k}_1) \otimes ({{b}_2}^{0} \times {k}_2)] \\
		&=&\Delta(c \times h) \Delta (b \times k).
	\end{eqnarray*}	
	
	\end{proof}

Once proved the multiplicativity of the coproduct of ${C \times H}$ and knowing that the coassociativity follows by the fact that the coproduct of the weak smash coproduct is inherited from $C \otimes H$, we are able to show the properties of weak bialgebra for $\varepsilon_{C \times H}$ as we can see in the following lemma.

\begin{lema}\label{epslon}
Let $C \times H$ be the weak smash coproduct. Then, the counity $\varepsilon_{C \times H}$ satisfies
	\begin{eqnarray*}
		\varepsilon((a \times k)(c \times h)(b \times l))&=&\varepsilon((a \times k){(c \times h)}_1) \varepsilon({(c \times h)}_2(b \times l))\\
		&=&\varepsilon((a \times k){(c \times h)}_2) \varepsilon({(c \times h)}_1(b \times l))
	\end{eqnarray*}
	for all $a,b,c \in C$ and $h, k, l \in H$. 	
\end{lema}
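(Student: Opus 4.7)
\emph{Proof plan.} The strategy is to expand both equalities with the explicit formulas for the product (Lemma \ref{produto}), coproduct, and counit of $C \times H$, and then reconcile the two sides using (CC2), (CC3), multiplicativity of $\Delta_H$, commutativity of $H$, and axiom (ii) of the counit applied separately to $\varepsilon_C$ and $\varepsilon_H$.

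First I expand the left-hand side. Lemma \ref{produto} gives $(a \times k)(c \times h)(b \times l) = acb \times khl$; by the definition of $\varepsilon$ and the multiplicativity of $\rho$ (valid since $C$ is a comodule bialgebra) this evaluates to $\varepsilon_C(a^0 c^0 b^0)\varepsilon_H(a^{-1} c^{-1} b^{-1} khl)$. I then split $\varepsilon_C(a^0 c^0 b^0)$ via axiom (ii) of $C$ at the middle factor $c^0$ and use (CC2), i.e.\ $c^{-1}\otimes (c^0)_1 \otimes (c^0)_2 = c_1^{-1}c_2^{-1}\otimes c_1^0\otimes c_2^0$, to obtain $\varepsilon_C(a^0 c_1^0)\varepsilon_C(c_2^0 b^0)\varepsilon_H(a^{-1}c_1^{-1}c_2^{-1}b^{-1}khl)$. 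For the first right-hand side, the coproduct formula gives $(a \times k)(c \times h)_1 = ac_1 \times kc_2^{-1}h_1$ and $(c \times h)_2(b \times l) = c_2^0 b \times h_2 l$; applying the counit, the multiplicativity of $\rho$, and then (CC3) to rewrite $c_2^{-1}\otimes c_2^{0-1}\otimes c_2^{00} = {c_2^{-1}}_1 \otimes {c_2^{-1}}_2 \otimes c_2^0$, yields $\varepsilon_C(a^0 c_1^0)\varepsilon_C(c_2^0 b^0)\varepsilon_H(a^{-1}c_1^{-1}k{c_2^{-1}}_1 h_1)\varepsilon_H({c_2^{-1}}_2 b^{-1} h_2 l)$, whose $\varepsilon_C$ part already matches the LHS.

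The remaining task is to merge the two $\varepsilon_H$ factors on the right into the single one on the left. Here I use the multiplicativity of $\Delta_H$ to recognize ${c_2^{-1}}_1 h_1\otimes {c_2^{-1}}_2 h_2 = \Delta_H(c_2^{-1}h)$, and commutativity of $H$ to place this coproduct in the shape $\varepsilon_H(x(c_2^{-1}h)_1)\varepsilon_H((c_2^{-1}h)_2 z)$; then axiom (ii) of $H$ read from right to left collapses the product into $\varepsilon_H(x(c_2^{-1}h)z) = \varepsilon_H(a^{-1}c_1^{-1}c_2^{-1}b^{-1}khl)$, completing the first equality. The second equality is handled by the analogous computation with the alternative form $\varepsilon(xyz) = \varepsilon(xy_2)\varepsilon(y_1 z)$ of axiom (ii) in both $C$ and $H$, after which the $\varepsilon_C$ side becomes $\varepsilon_C(a^0 c_2^0)\varepsilon_C(c_1^0 b^0)$ and matches the LHS via the same application of (CC2). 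The main obstacle I foresee is the sheer bookkeeping of indices coming from $\Delta_C$, $\Delta_H$ and the iterated coaction; the essential algebraic insight is the identification $\Delta_H(c_2^{-1}h) = {c_2^{-1}}_1 h_1\otimes {c_2^{-1}}_2 h_2$, which together with commutativity of $H$ makes the reverse application of axiom (ii) of $H$ available.
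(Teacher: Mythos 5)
Your proposal is correct and follows essentially the same route as the paper's proof: both expand the three expressions via Lemma \ref{produto} and the coproduct formula, then reconcile them using (CC2), (CC3), the multiplicativity of $\Delta_H$, the commutativity of $H$, and the weak counit axiom (ii) applied in $C$ and in $H$. The only cosmetic difference is that you meet in the middle while the paper rewrites each right-hand side all the way into $\varepsilon_C(a^0c^0b^0)\varepsilon_H(a^{-1}c^{-1}b^{-1}khl)$.
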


\begin{proof}
	Let $a,b,c \in C$ and $h, k, l \in H$,
	\begin{eqnarray*}
		\varepsilon((a \times k)(c \times h)(b \times l))&=& \varepsilon(acb \times khl)\\
		&=&\varepsilon_C({(acb)}^0)\varepsilon_H({(acb)}^{-1}khl)\\
		&=&\varepsilon_C(a^0c^0b^0)\varepsilon_H(a^{-1}c^{-1}b^{-1}khl)
	\end{eqnarray*}
	
	On the one hand,
	\begin{eqnarray*}
		&\ &\varepsilon((a \times k){(c \times h)}_1) \varepsilon({(c \times h)}_2(b \times l))\\
		&=&\varepsilon_C({(ac_1)}^{0})\varepsilon_H({(ac_1)}^{-1} k{c_2}^{-1}h_1) \varepsilon_C({({c_2}^0 b)}^{0})\varepsilon_H({({c_2}^0 b)}^{-1} h_2l)\\
		&=&\varepsilon_C(a^0{c_1}^{0})\varepsilon_H(a^{-1}{c_1}^{-1} k{c_2}^{-1}h_1) \varepsilon_C({c_2}^{00} b^{0})\varepsilon_H({c_2}^{0-1} b^{-1} h_2l)\\
		&\stackrel{(CC3)}{=}&\varepsilon_C(a^0{c_1}^{0})\varepsilon_H(a^{-1}{c_1}^{-1} k{{c_2}^{-1}}_1 h_1) \varepsilon_C({c_2}^{0} b^{0})\varepsilon_H({{c_2}^{-1}}_2 b^{-1} h_2l)\\
		&\stackrel{(CC2)}{=}&\varepsilon_C(a^0{c^0}_{1})\varepsilon_H(a^{-1}{c}^{-1}k h b^{-1}l) \varepsilon_C({c^0}_{2} b^{0})\\
		&{=}&\varepsilon_C(a^0{c^0} b^{0})\varepsilon_H(a^{-1}{c}^{-1}b^{-1}khl). \\
	\end{eqnarray*}
On the other hand,
	\begin{eqnarray*}
		&\ &\varepsilon((a \times k){(c \times h)}_2) \varepsilon({(c \times h)}_1(b \times l))\\
		&=&\varepsilon(a{c_2}^0 \times kh_2) \varepsilon({c_1} b \times {c_2}^{-1}h_1l)\\
		&=&\varepsilon_C({(a{c_2}^0)}^0)\varepsilon_H({(a{c_2}^0)}^{-1} kh_2) \varepsilon_C({({c_1} b)}^{0})\varepsilon_H({({c_1} b)}^{-1} {c_2}^{-1}h_1l)\\
		&=&\varepsilon_C(a^0{c_2}^{00})\varepsilon_H(a^{-1}{c_2}^{0-1} kh_2) \varepsilon_C({c_1}^0 b^{0})\varepsilon_H({c_1}^{-1} b^{-1} {c_2}^{-1}h_1l)\\
		&\stackrel{(CC3)}{=}&\varepsilon_C(a^0{c_2}^{0})\varepsilon_H(a^{-1}{{c_2}^{-1}}_2 kh_2) \varepsilon_C({c_1}^0 b^{0})\varepsilon_H({c_1}^{-1} b^{-1} {{c_2}^{-1}}_1 h_1l)\\
		&\stackrel{(CC2)}{=}&\varepsilon_C(a^0{c^0}_{2})\varepsilon_H(a^{-1}khl{c}^{-1} b^{-1}) \varepsilon_C({c^0}_1 b^{0})\\
		&{=}&\varepsilon_C(a^0{c^0} b^{0})\varepsilon_H(a^{-1}{c}^{-1}b^{-1}khl). 
	\end{eqnarray*}
\end{proof}

\begin{lema} \label{fraca}
Let $C \times H$ be the weak smash coproduct. Then, 
	\begin{eqnarray*}
		(1_{C \times H} \otimes \Delta(1_{C \times H}))(\Delta(1_{C \times H}) \otimes 1_{C \times H}) &=& (\Delta(1_{C \times H}) \otimes 1_{C \times H})(1_{C \times H} \otimes \Delta(1_{C \times H}))\\
		&=& (I \otimes \Delta)\Delta(1_{C \times H})\\
		&\stackrel{\ref{delta}}{=}& (\Delta \otimes I)\Delta(1_{C \times H}).
	\end{eqnarray*}
\end{lema}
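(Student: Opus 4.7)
The identity breaks naturally into two blocks, and my plan is to treat them separately.

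The third equality, $(I \otimes \Delta)\Delta(1_{C \times H}) = (\Delta \otimes I)\Delta(1_{C \times H})$, is nothing but coassociativity of $\Delta_{C \times H}$ applied to the unit, and coassociativity was already part of the verification that $C \times H$ is a coalgebra (inherited from the coassociative coproduct on $C \otimes H$ established at the start of this section). So this step is essentially free; the label pointing to Lemma \ref{delta} reflects that multiplicativity plays a role in the symmetric handling of the remaining equalities, but the underlying fact here is coassociativity.

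For the first two equalities, I would expand $1_{C \times H} = 1_C \times 1_H$ and use the explicit coproduct formula to write
$$\Delta(1_{C \times H}) = (1_C)_1 \times ((1_C)_2)^{-1}(1_H)_1 \;\otimes\; ((1_C)_2)^0 \times (1_H)_2,$$
and then expand each of the four expressions $(1 \otimes \Delta(1))(\Delta(1) \otimes 1)$, $(\Delta(1) \otimes 1)(1 \otimes \Delta(1))$, $(I \otimes \Delta)\Delta(1)$, $(\Delta \otimes I)\Delta(1)$ in $(C \times H)^{\otimes 3}$. The product sides are expanded via Lemma \ref{produto}, producing $\Delta_C(1_C)$ in two of the three tensor slots and $\Delta_H(1_H)$ correspondingly. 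The coproduct sides produce an iterated $\Delta_C$ on $1_C$, an iterated $\Delta_H$ on $1_H$, and a nested application of $\rho$ to a factor of $\Delta_C(1_C)$. The plan is to reduce every expression to a common normal form in terms of $\Delta_C^2(1_C)$, $\Delta_H^2(1_H)$, and the coaction on $1_C$, and then read off the equalities using the third weak bialgebra axiom applied \emph{separately} to $C$ and to $H$ (each of which is already a weak bialgebra/weak Hopf algebra).

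The main tools for this reduction are: axiom (CC2) and (CC3), used to trade iterated coactions for single coactions on coproducts (and vice versa); commutativity of $H$, which lets the $H$-factors coming from different tensor slots be freely rearranged (this is exactly what powered the proof of Lemma \ref{produto}); and the identities $\varepsilon_t \circ \varepsilon_s = \varepsilon_t$, $\varepsilon_s \circ \varepsilon_t = \varepsilon_s$ together with \eqref{4.7}, \eqref{4.10}, \eqref{4.11}, \eqref{4.39}, which are needed to absorb any $\varepsilon_H$ factors produced by the formula defining $c \times h$.

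The principal obstacle is bookkeeping, not insight. After $\Delta^2$ one has four coproduct factors of $1_C$, four of $1_H$, and a double coaction $((1_C)_2^0)_2^{-1}$, so the Sweedler expressions become unwieldy. The delicate part is lining up the $H$-indices (using commutativity) so that $\Delta_H^2(1_H)$ appears in a shape where the weak bialgebra axiom (iii) for $H$ can be applied, and simultaneously arranging the $C$-indices so that (CC2)/(CC3) collapse the nested coactions on $1_C$ into a single coaction on $\Delta_C^2(1_C)$, to which the weak bialgebra axiom (iii) for $C$ can in turn be applied. Once both factors are brought into this form, the three expressions are visibly equal.
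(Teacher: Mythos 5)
Your overall strategy coincides with the paper's: the last equality is indeed just coassociativity (already established when the coproduct on $C\otimes H$, hence on $C\times H$, was shown to be coassociative), and the first two equalities are proved by brute-force Sweedler expansion of both sides, multiplying componentwise via Lemma \ref{produto} and collapsing nested coactions with (CC2)/(CC3), commutativity of $H$, and the $\varepsilon_t$/$\varepsilon_s$ calculus. So the outline is the right one.

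Two concrete caveats, however. First, the verification does not literally decouple into ``axiom (iii) for $C$'' and ``axiom (iii) for $H$'' applied separately: after expansion the middle tensor slot of $(\Delta(1)\otimes 1)(1\otimes\Delta(1))$ is $({1_C}_2)^0\,{1_C}_{1'} \times {1_H}_2\,({1_C}_{2'})^{-1}\,{1_H}_{1'}$, so a coaction leg of one copy of $\Delta_C(1_C)$ sits \emph{between} the two unit legs of $\Delta_H(1_H)$. To reach $\Delta^2(1_{C\times H})$ one needs (a) the comodule \emph{algebra} half of the comodule-bialgebra hypothesis, so that $\rho({1_C}_2)\rho({1_C}_{1'})=\rho({1_C}_2{1_C}_{1'})$ and axiom (iii) for $C$ can be transported through $\rho$, and (b) repeated use of \eqref{4.1}, \eqref{4.12}, \eqref{4.13} (the avatars of axiom (iii) for $H$) together with \eqref{4.5}, \eqref{4.6}, \eqref{4.7} to absorb and regenerate the $\varepsilon_H$ factors — this is where commutativity of $H$ alone does not suffice. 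Second, the paper's computation invokes, at a step your toolkit does not cover, the fact that $\rho(1_C)\in H_s\otimes C$ (Proposition 4.11 of \cite{Caenepeel}), which is what permits replacing $\varepsilon_s({1_C}^{-1})\otimes {1_C}^0$ by ${1_C}^{-1}\otimes {1_C}^0$; without it the two normal forms do not match. Since the entire content of the lemma is this page-long verification, declaring that the expressions become ``visibly equal'' once in normal form leaves the essential work undone: the plan is sound, but the proof has not yet been carried out.
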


\begin{proof}
	Indeed,
	\begin{eqnarray*}
		&\ &(\Delta(1_{C \times H}) \otimes 1_{C \times H})(1_{C \times H} \otimes \Delta(1_{C \times H}))\\
		&=&{{1_C}_1}^0 \otimes {{{1_C}_2}^{-1}}_2 {1_H}_2 \varepsilon_H({{1_C}_1}^{-1}{{{1_C}_2}^{-1}}_1 {1_H}_1) \otimes \\
		&\ &{{1_C}_2}^{00}{{1_C}_{1'}}^0 \otimes {1_H}_4{{{1_C}_{2'}}^{-1}}_2 \varepsilon_H({{1_C}_2}^{0-1}{{1_C}_{1'}}^{-1}{1_H}_3{{{1_C}_{2'}}^{-1}}_1) \otimes {{1_C}_{2'}}^{00}\otimes {1_H}_{6} \varepsilon_H({{1_C}_{2'}}^{0-1}{1_H}_{5})\\
		&=&{{1_C}_1}^0 \otimes {{{1_C}_2}^{-1}}_3 {1_H}_2  \varepsilon_H({{1_C}_1}^{-1}{{{1_C}_2}^{-1}}_1)\varepsilon_H({{{1_C}_2}^{-1}}_2{1_H}_1)  \otimes \\
		&\ &{{1_C}_2}^{00}{{1_C}_{1'}}^0 \otimes {1_H}_4{{{1_C}_{2'}}^{-1}}_2 \varepsilon_H({{1_C}_2}^{0-1}{{1_C}_{1'}}^{-1}{1_H}_3{{{1_C}_{2'}}^{-1}}_1) \otimes {{1_C}_{2'}}^{00}\otimes {1_H}_{6} \varepsilon_H({{1_C}_{2'}}^{0-1}{1_H}_{5})\\
		&\stackrel{(CC3)}{=}&{{1_C}_1}^0 \otimes {{{1_C}_2}^{-1}}_2 {1_H}_1 \varepsilon_H({{1_C}_1}^{-1}{{{1_C}_2}^{-1}}_1)  \otimes \\
		&\ &{{1_C}_2}^{0}{{1_C}_{1'}}^0 \otimes {1_H}_2{{{1_C}_{2'}}^{-1}}_2 \varepsilon_H({{{1_C}_2}^{-1}}_3{{1_C}_{1'}}^{-1}{{{1_C}_{2'}}^{-1}}_1)  \otimes {{1_C}_{2'}}^{0}\otimes {1_H}_{4} \varepsilon_H({{{1_C}_{2'}}^{-1}}_3{1_H}_{3})\\
		&\stackrel{(\ref{4.5})}{=}&{{1_C}_1}^0 \otimes {{{1_C}_2}^{-1}}_2 {1_H}_1 \varepsilon_H({{1_C}_1}^{-1}{{{1_C}_2}^{-1}}_1)  \otimes \\
		&\ &{{1_C}_2}^{0}{{1_C}_{1'}}^0 \otimes {1_H}_2{{{1_C}_{2'}}^{-1}}_2 \varepsilon_H({{1_C}_{1'}}^{-1}{{{1_C}_{2'}}^{-1}}_1\varepsilon_t({{{1_C}_2}^{-1}}_3))  \otimes {{1_C}_{2'}}^{0}\otimes {1_H}_{3} \\
		&\stackrel{(\ref{4.12})}{=}&{{1_C}_1}^0 \otimes {1_H}_{1'}{{{1_C}_2}^{-1}}_2 {1_H}_1 \varepsilon_H({{1_C}_1}^{-1}{{{1_C}_2}^{-1}}_1)  \otimes {{1_C}_2}^{0}{{1_C}_{1'}}^0 \otimes {1_H}_2{{{1_C}_{2'}}^{-1}}_2 \varepsilon_H({{1_C}_{1'}}^{-1}{{{1_C}_{2'}}^{-1}}_1{1_H}_{2'})  \otimes {{1_C}_{2'}}^{0}\otimes {1_H}_{3} \\
		&=&{{1_C}_{1}}^0 \otimes \varepsilon_s({1_H}_{1'}{{{1_C}_{1'}}^{-1}}_1{{{1_C}_{2'}}^{-1}}_1)\varepsilon_H({1_H}_{2'}{{{1_C}_{1'}}^{-1}}_2 {{{1_C}_{2'}}^{-1}}_2){{{1_C}_{2}}^{-1}}_2{1_H}_{1} \varepsilon_H({{1_C}_{1}}^{-1} {{{1_C}_{2}}^{-1}}_1) \otimes \\
		&\ &{{1_C}_{2}}^{0} {{1_C}_{1'}}^{0} \otimes {{{1_C}_{2'}}^{-1}}_3{1_H}_{2}  \otimes {{1_C}_{2'}}^{0} \otimes{1_H}_{3}\\
		&=&{{1_C}_{1}}^0 \otimes {{{1_C}_{2}}^{-1}}_2 \varepsilon_s({1_H}_{1'}{{{1_C}_{1'}}^{-1}}_1{{{1_C}_{2'}}^{-1}}_1){1_H}_{1} \varepsilon_H({{1_C}_{1}}^{-1} {{{1_C}_{2}}^{-1}}_1) \otimes \\
		&\ &{{1_C}_{2}}^{0} {{1_C}_{1'}}^{0} \otimes {{{1_C}_{2'}}^{-1}}_3{1_H}_{2}\varepsilon_H({{{1_C}_{1'}}^{-1}}_2 {{{1_C}_{2'}}^{-1}}_2{1_H}_{2'})  \otimes {{1_C}_{2'}}^{0} \otimes{1_H}_{3}\\
		&\stackrel{(\ref{4.7})}{=}&{{1_C}_{1}}^0 \otimes {1_H}_{1'}{{{1_C}_{2}}^{-1}}_2 \varepsilon_s({{{1_C}_{1'}}^{-1}}_1{{{1_C}_{2'}}^{-1}}_1){1_H}_{1} \varepsilon_H({{1_C}_{1}}^{-1} {{{1_C}_{2}}^{-1}}_1) \otimes \\
		&\ &{{1_C}_{2}}^{0} {{1_C}_{1'}}^{0} \otimes {{{1_C}_{2'}}^{-1}}_3{1_H}_{2}\varepsilon_H({{{1_C}_{1'}}^{-1}}_2 {{{1_C}_{2'}}^{-1}}_2{1_H}_{2'})  \otimes {{1_C}_{2'}}^{0} \otimes{1_H}_{3}\\
		&\stackrel{(\ref{4.12})}{=}&{{1_C}_{1}}^0 \otimes {{{1_C}_{2}}^{-1}}_2 \varepsilon_s({{{1_C}_{1'}}^{-1}}_1{{{1_C}_{2'}}^{-1}}_1){1_H}_{1} \varepsilon_H({{1_C}_{1}}^{-1} {{{1_C}_{2}}^{-1}}_1) \otimes \\
		&\ &{{1_C}_{2}}^{0} {{1_C}_{1'}}^{0} \otimes {{{1_C}_{2'}}^{-1}}_3{1_H}_{2}\varepsilon_H({{{1_C}_{1'}}^{-1}}_2 {{{1_C}_{2'}}^{-1}}_2\varepsilon_t({{{1_C}_{2}}^{-1}}_3))  \otimes {{1_C}_{2'}}^{0} \otimes{1_H}_{3}\\
		&\stackrel{(\ref{4.5})}{=}&{{1_C}_{1}}^0 \otimes {{{1_C}_{2}}^{-1}}_2 \varepsilon_s({{{1_C}_{1'}}^{-1}}_1{{{1_C}_{2'}}^{-1}}_1){1_H}_{1} \varepsilon_H({{1_C}_{1}}^{-1} {{{1_C}_{2}}^{-1}}_1) \otimes \\
		&\ &{{1_C}_{2}}^{0} {{1_C}_{1'}}^{0} \otimes {{{1_C}_{2'}}^{-1}}_3{1_H}_{2}\varepsilon_H({{{1_C}_{1'}}^{-1}}_2 {{{1_C}_{2'}}^{-1}}_2{{{1_C}_{2}}^{-1}}_3)  \otimes {{1_C}_{2'}}^{0} \otimes{1_H}_{3}\\
		&\stackrel{(CC3)}{=}&{{1_C}_{1}}^0 \otimes {{{1_C}_{2}}^{-1}}_2 \varepsilon_s({{{1_C}_{1'}}^{-1}}{{{1_C}_{2'}}^{-1}}){1_H}_{1} \varepsilon_H({{1_C}_{1}}^{-1} {{{1_C}_{2}}^{-1}}_1) \otimes \\
		&\ &{{1_C}_{2}}^{0} {{1_C}_{1'}}^{00} \otimes {{{1_C}_{2'}}^{0-1}}_2{1_H}_{2}\varepsilon_H({{{1_C}_{2}}^{-1}}_3 {{1_C}_{1'}}^{0-1} {{{1_C}_{2'}}^{0-1}}_1)  \otimes {{1_C}_{2'}}^{00} \otimes{1_H}_{3}\\
		&\stackrel{(CC2)}{=}&{{1_C}_{1}}^0 \otimes {{{1_C}_{2}}^{-1}}_2 \varepsilon_s({{1_C}^{-1}}){1_H}_{1} \varepsilon_H({{1_C}_{1}}^{-1} {{{1_C}_{2}}^{-1}}_1) \otimes \\
		&\ &{{1_C}_{2}}^{0} {{{{1_C}^0}}_1}^0 \otimes {{{{{1_C}^0}}_2}^{-1}}_2{1_H}_{2}\varepsilon_H({{{1_C}_{2}}^{-1}}_3 {{{{1_C}^0}}_1}^{-1} {{{{{1_C}^0}}_2}^{-1}}_1)  \otimes {{{{1_C}^0}}_2}^{0} \otimes{1_H}_{3}\\
		&\stackrel{(*)}{=}&{{1_C}_{1}}^0 \otimes {{{1_C}_{2}}^{-1}}_2{{1_C}^{-1}}{1_H}_{1} \varepsilon_H({{1_C}_{1}}^{-1} {{{1_C}_{2}}^{-1}}_1) \otimes \\
		&\ &{{1_C}_{2}}^{0} {{{{1_C}^0}}_1}^0 \otimes {{{{{1_C}^0}}_2}^{-1}}_2{1_H}_{2}\varepsilon_H({{{1_C}_{2}}^{-1}}_3 {{{{1_C}^0}}_1}^{-1} {{{{{1_C}^0}}_2}^{-1}}_1)  \otimes {{{{1_C}^0}}_2}^{0} \otimes{1_H}_{3} \\
		&=&{{1_C}_{1}}^0 \otimes {{{1_C}_{2}}^{-1}}_2{{1_C}^{-1}}{1_H}_{1} \varepsilon_H({{1_C}_{1}}^{-1} {{{1_C}_{2}}^{-1}}_1) \otimes \\
		&\ &{{1_C}_{2}}^{0} {{{{1_C}^0}}_1}^0 \otimes {{{{{1_C}^0}}_2}^{-1}}_2{1_H}_{2}\varepsilon_H({{{{{1_C}^0}}_2}^{-1}}_3
		{1_H}_{3})\varepsilon_H({{{1_C}_{2}}^{-1}}_3 {{{{1_C}^0}}_1}^{-1} {{{{{1_C}^0}}_2}^{-1}}_1)  \otimes {{{{1_C}^0}}_2}^{0} \otimes{1_H}_{4} \\
		&\stackrel{(CC3)}{=}&{{1_C}_{1}}^0 \otimes {{{1_C}_{2}}^{-1}}_2{{1_C}^{-1}}{1_H}_{1} \varepsilon_H({{1_C}_{1}}^{-1} {{{1_C}_{2}}^{-1}}_1) \otimes \\
		&\ &{{1_C}_{2}}^{00} {{{{1_C}^0}}_1}^0 \otimes {{{{{1_C}^0}}_2}^{-1}}_2{1_H}_{2}\varepsilon_H({{1_C}_{2}}^{0-1} {{{{1_C}^0}}_1}^{-1} {{{{{1_C}^0}}_2}^{-1}}_1)  \otimes {{{{1_C}^0}}_2}^{00} \otimes{1_H}_{4} \varepsilon_H({{{{1_C}^0}}_2}^{0-1}
		{1_H}_{3})\\
		&\stackrel{(\ref{4.1})}{=}&{{1_C}_{1}}^0 \otimes {{{1_C}_{2}}^{-1}}_2{1_H}_{2'}{{1_C}^{-1}}{1_H}_{1} \varepsilon_H({{1_C}_{1}}^{-1} {{{1_C}_{2}}^{-1}}_1{1_H}_{1'}) \otimes \\
		&\ &{{1_C}_{2}}^{00} {{{{1_C}^0}}_1}^0 \otimes {{{{{1_C}^0}}_2}^{-1}}_2{1_H}_{2}\varepsilon_H({{1_C}_{2}}^{0-1} {{{{1_C}^0}}_1}^{-1} {{{{{1_C}^0}}_2}^{-1}}_1)  \otimes {{{{1_C}^0}}_2}^{00} \otimes{1_H}_{4} \varepsilon_H({{{{1_C}^0}}_2}^{0-1}
		{1_H}_{3})\\
		&\stackrel{(\ref{4.13})}{=}&{{1_C}_{1}}^0 \otimes {{{1_C}_{2}}^{-1}}_2{{1_C}^{-1}}_2{1_H}_{1} \varepsilon_H(\varepsilon_s({{1_C}^{-1}}_1){{1_C}_{1}}^{-1} {{{1_C}_{2}}^{-1}}_1) \otimes \\
		&\ &{{1_C}_{2}}^{00} {{{{1_C}^0}}_1}^0 \otimes {{{{{1_C}^0}}_2}^{-1}}_2{1_H}_{2}\varepsilon_H({{1_C}_{2}}^{0-1} {{{{1_C}^0}}_1}^{-1} {{{{{1_C}^0}}_2}^{-1}}_1)  \otimes {{{{1_C}^0}}_2}^{00} \otimes{1_H}_{4} \varepsilon_H({{{{1_C}^0}}_2}^{0-1}
		{1_H}_{3})\\
		&\stackrel{(\ref{4.6})}{=}&{{1_C}_{1}}^0 \otimes {{{1_C}_{2}}^{-1}}_2{{1_C}^{-1}}_2{1_H}_{1} \varepsilon_H({{1_C}^{-1}}_1{{1_C}_{1}}^{-1} {{{1_C}_{2}}^{-1}}_1) \otimes \\
		&\ &{{1_C}_{2}}^{00} {{{{1_C}^0}}_1}^0 \otimes {{{{{1_C}^0}}_2}^{-1}}_2{1_H}_{2}\varepsilon_H({{1_C}_{2}}^{0-1} {{{{1_C}^0}}_1}^{-1} {{{{{1_C}^0}}_2}^{-1}}_1)  \otimes {{{{1_C}^0}}_2}^{00} \otimes{1_H}_{4} \varepsilon_H({{{{1_C}^0}}_2}^{0-1}
		{1_H}_{3})\\
		&=&{{1_C}_{1}}^0 \otimes {{{1_C}_{2}}^{-1}}_2{{1_C}^{-1}}_3{1_H}_{2}\varepsilon_H({{1_C}^{-1}}_2{1_H}_{1}) \varepsilon_H({{1_C}_{1}}^{-1} {{{1_C}_{2}}^{-1}}_1{{1_C}^{-1}}_1) \otimes \\
		&\ &{{1_C}_{2}}^{00} {{{{1_C}^0}}_1}^0 \otimes {{{{{1_C}^0}}_2}^{-1}}_3{1_H}_{4} \varepsilon_H({{{{{1_C}^0}}_2}^{-1}}_2{1_H}_{3})\varepsilon_H({{1_C}_{2}}^{0-1} {{{{1_C}^0}}_1}^{-1} {{{{{1_C}^0}}_2}^{-1}}_1)  \otimes {{{{1_C}^0}}_2}^{00} \otimes{1_H}_{6} \varepsilon_H({{{{1_C}^0}}_2}^{0-1}
		{1_H}_{5})\\
		&=&{1_C}_{1} \times {{1_C}_{2}}^{-1}{1_C}^{-1}{1_H}_{1} \otimes {{1_C}_{2}}^{0} {{{1_C}^0}}_1\times {{{{1_C}^0}}_2}^{-1}{1_H}_{2} \otimes {{{{1_C}^0}}_2}^{0} \times{1_H}_{3}\\
		&\stackrel{(CC2)}{=}&{1_C}_{1} \times {{1_C}_{2}}^{-1}{{1_C}_{1'}}^{-1}{{1_C}_{2'}}^{-1} {1_H}_{1} \otimes {{1_C}_{2}}^{0} {{1_C}_{1'}}^{0}\times {{1_C}_{2'}}^{0-1}{1_H}_{2} \otimes {{1_C}_{2'}}^{00} \times{1_H}_{3}\\
		&{=}&{1_C}_{1} \times {{1_C}_{2}}^{-1}{{1_C}_{3}}^{-1} {1_H}_{1} \otimes {{1_C}_{2}}^{0} \times {{1_C}_{3}}^{0-1}{1_H}_{2} \otimes {{1_C}_{3}}^{00} \times{1_H}_{3}\\
		&\stackrel{(CC2)}{=}&{1_C}_{1} \times {{1_C}_{2}}^{-1} {1_H}_{1} \otimes {{{1_C}_{2}}^{0}}_1 \times {{{{1_C}_{2}}^{0}}_2}^{-1}{1_H}_{2} \otimes {{{{1_C}_{2}}^{0}}_2}^{0} \times{1_H}_{3}\\
		&=&(I \otimes \Delta)\Delta(1_{C \times H}).
	\end{eqnarray*}
In $(*)$ it was used the property $\rho(1) \in H_s \otimes C$ of Proposition 4.11 of \cite{Caenepeel}. Similarly it is possible to show
	\begin{eqnarray*}
(1_{C \times H} \otimes \Delta(1_{C \times H}))(\Delta(1_{C \times H}) \otimes 1_{C \times H})
		&=&(I \otimes \Delta)\Delta(1_{C \times H}).
	\end{eqnarray*}
\end{proof}

Therefore, we obtain the following result.

\begin{prop}\label{bialgebra fraca}
Let $C$ be a weak bialgebra and $H$ a commutative weak Hopf algebra such that $C$ is a  $H$-comodule bialgebra. Then, $C \times H$ is a weak  bialgebra.
\end{prop}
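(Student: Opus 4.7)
The plan is to observe that this proposition is essentially the assembly of the four preceding lemmas, so the proof should be organized as a verification that each axiom of a weak bialgebra is covered by a previously-established result.

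First I would recall that $C \times H$ was already shown to be a coalgebra with coproduct $\Delta(c \times h) = c_1 \times {c_2}^{-1}h_1 \otimes {c_2}^{0} \times h_2$ and counit $\varepsilon(c \times h) = \varepsilon_C(c^0)\varepsilon_H(c^{-1}h)$ in the unnumbered proposition preceding the commutativity hypothesis on $H$. Next, Lemma \ref{produto} provides the multiplication $(c \times h)(b \times k) = (cb \times hk)$ inherited from $C \otimes H$; associativity follows from associativity of $C$ and $H$, and $1_{C \times H} = 1_C \times 1_H$ is a two-sided unit (as already noted immediately after Lemma \ref{produto}). Thus $C \times H$ is simultaneously an algebra and a coalgebra.

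It then remains to verify the three axioms (i)--(iii) in the definition of weak bialgebra for $C \times H$. Axiom (i), namely multiplicativity of $\Delta$, is exactly the content of Lemma \ref{delta}. Axiom (ii), the compatibility $\varepsilon(xyz) = \varepsilon(xy_1)\varepsilon(y_2z) = \varepsilon(xy_2)\varepsilon(y_1z)$ for $x, y, z \in C \times H$, is precisely the statement proved in Lemma \ref{epslon}. Axiom (iii), the condition
\[
(1 \otimes \Delta(1))(\Delta(1) \otimes 1) = (\Delta(1) \otimes 1)(1 \otimes \Delta(1)) = \Delta^2(1),
\]
is exactly what Lemma \ref{fraca} establishes for $1_{C \times H}$.

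Since the lemmas used the commutativity of $H$ (to ensure $\varepsilon_t$ and $\varepsilon_s$ are multiplicative and satisfy \eqref{epsilon1}--\eqref{epsilon2}) together with the $H$-comodule bialgebra structure of $C$ (so that $\rho$ is simultaneously an algebra and coalgebra morphism, and $\rho(1_C) \in H_s \otimes C$), no extra hypothesis is needed beyond those in the statement. Consequently, the proof reduces to writing ``by Lemmas \ref{produto}, \ref{delta}, \ref{epslon} and \ref{fraca}, the vector space $C \times H$ with the product and coproduct above satisfies all axioms of a weak bialgebra.'' No step here should be a genuine obstacle; the substantive computational difficulties have all been absorbed into the preceding lemmas, so the main task is merely to confirm that those four lemmas together exhaust the defining axioms of a weak bialgebra and to cite them in order.
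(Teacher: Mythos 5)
Your proposal is correct and matches the paper exactly: the paper introduces this proposition with the phrase ``Therefore, we obtain the following result,'' i.e.\ it is presented precisely as the assembly of Lemmas \ref{produto}, \ref{delta}, \ref{epslon} and \ref{fraca} together with the earlier coalgebra structure on $C \times H$, with no further argument given. Your accounting of which lemma supplies which weak-bialgebra axiom is accurate and complete.
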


The next step is to know if we can give a weak Hopf algebra structure to $C \times H$. However, it is necessary to impose a natural condition on $C$, as it can be seen in Theorem \ref{Hopf fraca}.

\begin{teo}\label{Hopf fraca}
Let $C$ and $H$ be two weak Hopf algebras such that $C$ is a  $H$-comodule bialgebra and $H$ is commutative. Then, $C \times H$ is a weak Hopf algebra.
\end{teo}

\begin{proof}
By Proposition \ref{bialgebra fraca} we know that $C \times H$ is a weak bialgebra, so, it is enough to define a map $S_{C \times H}$ in $C \times H$ and show that $S_{C \times H}$ satisfies the properties of antipode of a weak Hopf algebra. Define $S_{C \times H}$ by
	$$S_{C \times H}(c \times h) = S_C(c^{0}) \times S_H(c^{-1}h),$$
	for all $c \in C$ and $h \in H$.

(I) ${(c \times h)}_1 S({(c \times h)}_2) = \varepsilon_t(c \times h)$, indeed for all $c \in C$ and $h \in H$
		\begin{eqnarray*}
			{(c \times h)}_1 S({(c \times h)}_2)&\stackrel{(CC3)}{=}& c_1S_C({c_2}^{0}) \times {{c_2}^{-1}}_1h_1 S_H({{c_2}^{-1}}_2h_2)\\
			&\stackrel{(\ref{4.10})}{=}& {c_1}^0 {S_C({c_2}^{0})}^0 \otimes {1_H}_2 \varepsilon_H({c_1}^{-1} {S_H({c_2}^{0})}^{-1} {1_H}_1{\varepsilon_t({{c_2}^{-1}}h)})\\
			&\stackrel{(\ref{4.5})}{=}& {c_1}^0 {S_C({c_2}^{0})}^0 \otimes \varepsilon_t({c_1}^{-1} {S_H({c_2}^{0})}^{-1} {c_2}^{-1}h)\\
			&\stackrel{(\ref{propt})}{=}& {c_1}^0 {S_C({c_2}^{0})}^0 \otimes \varepsilon_t(\varepsilon_t({{c_1}^{-1}}_1){{c_1}^{-1}}_2)  \varepsilon_t({c_2}^{-1} {S_H({c_2}^{0})}^{-1} h)\\
			&\stackrel{(\ref{4.8})}{=}& {c_1}^0 {S_C({c_2}^{0})}^0 \otimes \varepsilon_t({{c_1}^{-1}}_1)\varepsilon_t({{c_1}^{-1}}_2)  \varepsilon_t({c_2}^{-1} {S_H({c_2}^{0})}^{-1} h)\\
			&\stackrel{(CC3)}{=}&{c_1}^{00}{S_C({c_2}^0)}^0 \otimes \varepsilon_t({c_1}^{0-1}{S_H({c_2}^0)}^{-1} {c_1}^{-1}{c_2}^{-1}h)\\
			&\stackrel{(CC2)}{=}&{{c^0}_1}^0{S_C({c^0}_2)}^0 \otimes \varepsilon_t({{c^0}_1}^{-1}{S_H({c^0}_2)}^{-1} {c}^{-1}h)\\
			&\stackrel{(\ref{4.5})}{=}&{{c^0}_1}^0{S_C({c^0}_2)}^0 \otimes {1_H}_2 \varepsilon_H({1_H}_1{{c^0}_1}^{-1}{S_H({c^0}_2)}^{-1} {\varepsilon_t({c}^{-1}h)})\\
			&\stackrel{(\ref{4.10})}{=}&{{c^0}_1}^0{S_C({c^0}_2)}^0 \otimes {\varepsilon_t({c}^{-1}h)}_2 \varepsilon_H({{c^0}_1}^{-1}{S_H({c^0}_2)}^{-1} {\varepsilon_t({c}^{-1}h)}_1)\\
			&\stackrel{(CC2)}{=}&{c_1}^0S_C({c_2}^0) \times \varepsilon_t({c_1}^{-1}{c_2}^{-1}h)\\
			&\stackrel{(CC2)}{=}&{{1_C}^0} {c^0}_1S_C({c^0}_2) \times \varepsilon_t({1_C}^{-1} c^{-1}h)\\
			&\stackrel{(\ref{4.13})}{=}&\varepsilon_C(\varepsilon_s({{1_C}^0}_1) c^0) ({{1_C}^0}_2 \times \varepsilon_t({1_C}^{-1} c^{-1}h))\\
			&\stackrel{(\ref{4.6})}{=}&\varepsilon_C({{1_C}^0}_1 c^0) ({{1_C}^0}_2 \times \varepsilon_t({1_C}^{-1} c^{-1}h))\\
			&\stackrel{(CC2)}{=}&\varepsilon_C({{1_C}_1}^0 c^0) ({{1_C}_2}^0 \times \varepsilon_t({{1_C}_1}^{-1}{{1_C}_2}^{-1} c^{-1}h))\\
			&=&\varepsilon_t(c \times h).
		\end{eqnarray*}

(II) $S({(c \times h)}_1){(c \times h)}_2 = \varepsilon_s(c \times h)$, indeed for all $c \in C$ and $h \in H$
		\begin{eqnarray*}	
	S({(c \times h)}_1){(c \times h)}_2 &\stackrel{(CC2)}{=}& S_C({c^0}_1){c^0}_{2} \times S_H({c}^{-1}h_1) h_2\\
			&=& {{1_C}_1}^0\varepsilon_C({c^0}{1_C}_2) \otimes {S_H({c}^{-1})}_2 {\varepsilon_s(h)}_2 \varepsilon_H({{1_C}_1}^{-1} {S_H({c}^{-1})}_1 {\varepsilon_s(h)}_1)\\
			&\stackrel{(\ref{4.11})}{=}& {{1_C}_1}^0\varepsilon_C({c^0}{1_C}_2) \otimes {S_H({c}^{-1})}_2 {\varepsilon_s(h)}{1_H}_2 \varepsilon_H({{1_C}_1}^{-1} {S_H({c}^{-1})}_1 {1_H}_1)\\
			&=& {{1_C}_1}^0\varepsilon_C({c^0}{1_C}_2) \otimes \varepsilon_t({S_H({c}^{-1})}_1){S_H({c}^{-1})}_2 {\varepsilon_s(h)}\varepsilon_t({{1_C}_1}^{-1}) \\
			&\stackrel{(\ref{propt})}{=}& {{1_C}_1}^0\varepsilon_C({c^0}{1_C}_2) \otimes {S_H({c}^{-1})} {\varepsilon_s(h)}\varepsilon_t({{1_C}_1}^{-1}) \\
			&\stackrel{(*)}{=}& {{1_C}_1}^0\varepsilon_C({c}^0{{1_C}_2}^0) \otimes {S_H({c}^{-1}\overline{\varepsilon_t}({{1_C}_2}^{-1}))} \varepsilon_s(h)\varepsilon_t({{1_C}_1}^{-1}) \\
			&\stackrel{\overline{\varepsilon_t} = \varepsilon_s}{=}& {{1_C}_1}^0\varepsilon_C({c}^0{{1_C}_2}^0) \otimes {S_H({c}^{-1}\varepsilon_s({{1_C}_2}^{-1}))} {\varepsilon_s(h)}\varepsilon_t({{1_C}_1}^{-1}) \\
			&\stackrel{(\ref{4.34})}{=}& {{1_C}_1}^0\varepsilon_C({c}^0{{1_C}_2}^0) \otimes \varepsilon_t(\varepsilon_s({{1_C}_2}^{-1}))S_H({c}^{-1}) {\varepsilon_s(h)}\varepsilon_t({{1_C}_1}^{-1}) \\
			&\stackrel{(\ref{epsilon1})}{=}& {{1_C}_1}^0\varepsilon_C({c}^0{{1_C}_2}^0) \otimes \varepsilon_t({{1_C}_2}^{-1})S_H({c}^{-1}) {\varepsilon_s(h)}\varepsilon_t({{1_C}_1}^{-1}) \\
			&\stackrel{(CC2)}{=}& {{1_C}^0}_1\varepsilon_C({c}^0{{1_C}^0}_2) \otimes S_H({c}^{-1}){\varepsilon_s(h)}\varepsilon_t({1_C}^{-1}) \\
			&\stackrel{(\ref{epsilon1})}{=}& {{1_C}^0}_1\varepsilon_C({c}^0{{1_C}^0}_2) \otimes S_H({c}^{-1}){\varepsilon_s(h)}\varepsilon_t (\varepsilon_s({1_C}^{-1})) \\
			&\stackrel{(\ref{4.34})}{=}& {{1_C}^0}_1\varepsilon_C({c}^0{{1_C}^0}_2) \otimes S_H({c}^{-1}){\varepsilon_s(h)} S_H(\varepsilon_s({1_C}^{-1})) \\
			&\stackrel{(**)}{=}& {{1_C}^0}_1\varepsilon_C({c}^0{{1_C}^0}_2) \otimes S_H({c}^{-1}{1_C}^{-1}){\varepsilon_s(h)}  \\
			&\stackrel{(CC2)}{=}& {{1_C}_1}^0\varepsilon_C({c}^0{{1_C}_2}^0) \otimes S_H({c}^{-1}{{1_C}_1}^{-1}{{1_C}_2}^{-1}){\varepsilon_s(h)}  \\
			&\stackrel{(\ref{4.35})}{=}& {{1_C}_1}^0\varepsilon_C({c}^0{{1_C}_2}^0) \otimes S_H({{1_C}_1}^{-1})\varepsilon_s(\varepsilon_t({{1_C}_2}^{-1}))\varepsilon_s(\varepsilon_t({c}^{-1})){\varepsilon_s(h)}  \\
			&\stackrel{(\ref{epsilon2})}{=}& {{1_C}_1}^0\varepsilon_C({c}^0{{1_C}_2}^0) \otimes S_H({{1_C}_1}^{-1})\varepsilon_s({{1_C}_2}^{-1})\varepsilon_s({c}^{-1}){\varepsilon_s(h)}  \\
			&\stackrel{(CC3)}{=}& {{1_C}_1}^0\varepsilon_C({c}^0{{1_C}_2}^{00}) \otimes S_H({{1_C}_1}^{-1})S_H({{{1_C}_2}^{-1}}){{{1_C}_2}^{0-1}}\varepsilon_s({c}^{-1}h)\\
			&\stackrel{(CC2)}{=}& {{1_C}^0}_1\varepsilon_C({c}^0{{{1_C}^0}_2}^{0}) \otimes S_H({1_C}^{-1}){{{1_C}^0}_2}^{-1}\varepsilon_s({c}^{-1}h)\\
			&\stackrel{(**)}{=}& {{1_C}^0}_1\varepsilon_C({c}^0{{{1_C}^0}_2}^{0}) \otimes S_H(\varepsilon_s({1_C}^{-1})){{{1_C}^0}_2}^{-1}\varepsilon_s({c}^{-1}h)\\
			&\stackrel{(\ref{4.34})}{=}& {{1_C}^0}_1\varepsilon_C({c}^0{{{1_C}^0}_2}^{0}) \otimes \varepsilon_t(\varepsilon_s({1_C}^{-1})){{{1_C}^0}_2}^{-1}\varepsilon_s({c}^{-1}h)\\
			&\stackrel{(\ref{epsilon1})}{=}& {{1_C}^0}_1\varepsilon_C({c}^0{{{1_C}^0}_2}^{0}) \otimes \varepsilon_t({1_C}^{-1}){{{1_C}^0}_2}^{-1}\varepsilon_s({c}^{-1}h)\\
			&\stackrel{(CC2)}{=}& {{1_C}_1}^0\varepsilon_C({c}^0{{1_C}_2}^{00}) \otimes \varepsilon_t({{1_C}_1}^{-1}{{1_C}_2}^{-1}){{1_C}_2}^{0-1}\varepsilon_s({c}^{-1}h)\\
			&\stackrel{(CC3)}{=}& {{1_C}_1}^0\varepsilon_C({c}^0{{1_C}_2}^{0}) \otimes \varepsilon_t({{1_C}_1}^{-1}{{{1_C}_2}^{-1}}_1){{{1_C}_2}^{-1}}_2\varepsilon_s({c}^{-1}h)\\
			&\stackrel{(\ref{4.3})}{=}&{{1_C}_1}^0 \otimes {{{1_C}_2}^{-1}}_2 \varepsilon_s (c^{-1}h) \varepsilon_t({{{1_C}_2}^{-1}}_1) \varepsilon_t(\varepsilon_t({{1_C}_1}^{-1})) \varepsilon_C(c^0{{1_C}_2}^{0}) \\
			&\stackrel{(\ref{4.8})}{=}&{{1_C}_1}^0 \otimes {{{1_C}_2}^{-1}}_2 \varepsilon_s (c^{-1}h) \varepsilon_t({{{1_C}_2}^{-1}}_1 {{1_C}_1}^{-1}) \varepsilon_C(c^0{{1_C}_2}^{0}) \\
			&=&{{1_C}_1}^0 \otimes {{{1_C}_2}^{-1}}_2 \varepsilon_s (c^{-1}h) {1_H}_2 \varepsilon_H({{1_C}_1}^{-1} {{{1_C}_2}^{-1}}_1 {1_H}_1 ) \varepsilon_C(c^0{{1_C}_2}^{0}) \\
			&\stackrel{(\ref{4.11})}{=}&{{1_C}_1}^0 \otimes {{{1_C}_2}^{-1}}_2 {\varepsilon_s (c^{-1}h)}_2 \varepsilon_H({{1_C}_1}^{-1} {{{1_C}_2}^{-1}}_1 {\varepsilon_s (c^{-1}h)}_1 ) \varepsilon_C(c^0{{1_C}_2}^{0}) \\
			&=&{1_C}_1 \times {{{1_C}_2}^{-1}}\varepsilon_s (c^{-1}h) \varepsilon_C(c^0{{1_C}_2}^{0}) \\
			&\stackrel{(\ref{props})}{=}&{1_C}_1 \times {{{1_C}_2}^{-1}}_1\varepsilon_s({{{1_C}_2}^{-1}}_2)\varepsilon_s (c^{-1}h) \varepsilon_C(c^0{{1_C}_2}^{0}) \\
			&\stackrel{(CC3)}{=}&({1_C}_1 \times {{1_C}_2}^{-1}{1_H}_1) \varepsilon_C(c^0{{1_C}_2}^{00}) \varepsilon_H (c^{-1}{{1_C}_2}^{0-1}h{1_H}_2)\\
			&=& \varepsilon_s(c \times h).
		\end{eqnarray*}

	In $(*)$ it was used Proposition 4.27 of \cite{Caenepeel}. In $(**)$ it was used the property $\rho(1) \in H_s \otimes C$ of Proposition 4.11 of \cite{Caenepeel}.

(III) $S({(c \times h)}_1){(c \times h)}_2 S({(c \times h)}_3)= S(c \times h)$, indeed for all $c \in C$ and $h \in H$
		\begin{eqnarray*}
	S({(c \times h)}_1){(c \times h)}_2 S({(c \times h)}_3) &\stackrel{(\ref{epsilon1})}{=}& S_C({c_1}^0)\varepsilon_t({c_2}^{00}) \times S_H({c_1}^{-1}{{c_2}^{-1}} h_1)\varepsilon_t(\varepsilon_s({{c_2}^{0-1}} h_2))\\
			&\stackrel{(\ref{4.34})}{=}& S_C({c_1}^0)\varepsilon_t({c_2}^{00}) \times S_H({c_1}^{-1}{{c_2}^{-1}} h_1)S_H(\varepsilon_s({{c_2}^{0-1}} h_2))\\
			&\stackrel{(CC3)}{=}& S_C({c_1}^0)\varepsilon_t({c_2}^0) \times S_H({c_1}^{-1}{{c_2}^{-1}}_1 h_1\varepsilon_s({{c_2}^{-1}}_2)  \varepsilon_s(h_2))\\
			&\stackrel{(\ref{props})}{=}& S_C({c_1}^0)\varepsilon_t({c_2}^0) \times S_H({c_1}^{-1}{c_2}^{-1}h)\\
			&=& S(c \times h).
		\end{eqnarray*}
Therefore $C \times H$ is a weak Hopf algebra.		
	\end{proof}

\section{Dualization}

In order to show that a partial coaction on a coalgebra can generate a partial action on a coalgebra, and vice versa, we need to assume the additional hypothesis that the weak Hopf algebra $H$ is finite dimensional. Thus, we know that the dual $H^*$ of $ H $ is also a weak Hopf algebra.

\begin{teo}\label{teodualizacao}
	Let $C$ be a coalgebra and $H$ be a weak Hopf algebra finite dimensional. Then, the following affirmations are equivalent:
	\begin{itemize}
		\item [(i)] $C$ is a left partial $H$-comodule coalgebra;
		\item[(ii)] $C$ is a right partial $H^*$-module coalgebra.
	\end{itemize}
	Moreover, to say that $C$ is a left symmetric partial $H$-comodule coalgebra is equivalent to say that $C$ is a right symmetric partial $H^*$-module coalgebra.
\end{teo}

\begin{proof}
	Suppose that $C$ is a left symmetric partial $H$-comodule coalgebra via
	\begin{eqnarray*}
		\rho: C &\rightarrow& H \otimes C\\
		c &\mapsto& c^{-1} \otimes c^0.
	\end{eqnarray*}
	Then, $C$ is a right symmetric partial $H^*$-module coalgebra via
	\begin{eqnarray*}
		\leftharpoonup: C \otimes H^*&\rightarrow& C\\
		c \otimes f &\mapsto&  c \leftharpoonup f = f(c^{-1})c^0.
	\end{eqnarray*}
Conversely, suppose that $C$ is a right symmetric partial $H^*$-module coalgebra via
	\begin{eqnarray*}
		\leftharpoonup: C \otimes H^* &\rightarrow& C\\
		c \otimes f &\mapsto& c \leftharpoonup f.
	\end{eqnarray*}
	Then, $C$ is a left symmetric partial $H$-comodule coalgebra via	
	\begin{eqnarray*}
		\rho: C &\rightarrow& H \otimes C\\
		c &\mapsto& \sum_{i=1}^{n} (h_i  \otimes c \leftharpoonup {h_i}^*)
	\end{eqnarray*}
	where, $\{h_i\}_{i=1}^{n}$ is a basis of $H$ and $\{{h_i}^*\}_{i=1}^{n}$ is the dual basis of $H^*$.

\end{proof}

Let $\lambda$ be an element in $H^*$. We say that $C$ is a (right) \textit{partial $H$-module coalgebra via $\lambda$} if $c \leftharpoonup h = c \lambda(h)$ defines a partial action of $ H $ on the coalgebra $ C $ for all $h\in H$ and $c\in C$. Moreover, in \cite{EGG} it was proved that 	$C$ is a partial $H$-module coalgebra via $\lambda$ if and only if for all $h,k \in H$
\begin{enumerate}
		\item [(i)] $\lambda(1_H) = 1_\Bbbk$
		\item [(ii)] $\lambda(h)\lambda(k) =\lambda(h_1)\lambda(h_2k).$
	\end{enumerate}
Another result of dualization obtained is the one that says that the element $\lambda$ defined from the partial action via $\lambda $ is equal to the element defined from the partial coaction $\overline{\rho_\lambda}$ presented in Proposition \ref{rho_hparcial} as can be seen as follows.

\begin{cor}
	$C$ is a right partial $H$-module coalgebra via $c \leftharpoonup h = c \lambda(h)$ if and only if $C$ is a left partial $H^*$-comodule coalgebra via $\overline{\rho_{\lambda}}(c)= \lambda \otimes c$.
\end{cor}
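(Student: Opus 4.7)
The plan is to derive this corollary as a direct specialization of Theorem \ref{teodualizacao}, swapping the roles of $H$ and $H^*$. Since $H$ is finite dimensional (as already assumed in the setup of this dualization section), so is $H^*$, and we have the canonical isomorphism $(H^*)^* \cong H$ given by $h \mapsto \mathrm{ev}_h$ with $\mathrm{ev}_h(f) = f(h)$. Applying Theorem \ref{teodualizacao} with $H^*$ in place of $H$ immediately yields the equivalence between a left partial $H^*$-comodule coalgebra structure on $C$ and a right partial $(H^*)^* \cong H$-module coalgebra structure on $C$; what remains is to check that this equivalence sends the specific coaction $\overline{\rho_\lambda}$ to the specific action $c \leftharpoonup h = c\lambda(h)$ and vice versa.

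To verify the forward direction, I would trace through the explicit formula from the proof of Theorem \ref{teodualizacao}: a left partial $H^*$-comodule coalgebra structure $\rho(c) = c^{-1}\otimes c^0$ (with $c^{-1}\in H^*$) induces the right action $c \leftharpoonup \phi = \phi(c^{-1})c^0$ for $\phi \in (H^*)^*$. Under $(H^*)^* \cong H$, this reads $c \leftharpoonup h = c^{-1}(h)c^0$. Specializing to $\overline{\rho_\lambda}(c) = \lambda \otimes c$, so that $c^{-1} = \lambda$ and $c^0 = c$, one obtains $c \leftharpoonup h = \lambda(h) c$, which is precisely the action via $\lambda$. For the converse, starting with $c \leftharpoonup h = c\lambda(h)$, the inverse construction from Theorem \ref{teodualizacao} produces the coaction $c \mapsto \sum_{i=1}^n h_i^* \otimes (c \leftharpoonup h_i)$ for a basis $\{h_i\}$ of $H$ and its dual basis $\{h_i^*\}$; computing gives $\sum_{i=1}^n \lambda(h_i)\, h_i^* \otimes c = \lambda \otimes c$, recovering $\overline{\rho_\lambda}$.

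There is no serious obstacle here: the content is essentially bookkeeping once Theorem \ref{teodualizacao} is available. The only subtlety worth being explicit about is that the correspondence from coaction to action involves the evaluation pairing in the direction $c^{-1}(h)$, and that the canonical identification $(H^*)^* \cong H$ is used. Alternatively, one could give a more hands-on proof by directly checking that the defining conditions of a partial action via $\lambda$ recalled before the statement (namely $\lambda(1_H) = 1_\Bbbk$ and $\lambda(h)\lambda(k) = \lambda(h_1)\lambda(h_2k)$) are exactly the conditions of Proposition \ref{rho_hparcial} applied in $H^*$ (namely $\varepsilon_{H^*}(\lambda) = 1_\Bbbk$ and $(\lambda \otimes 1_{H^*})\Delta_{H^*}(\lambda) = \lambda \otimes \lambda$), since $\varepsilon_{H^*}(\lambda) = \lambda(1_H)$ and $\Delta_{H^*}(\lambda)$ is characterized by the relation $\lambda(hk) = \lambda_1(h)\lambda_2(k)$. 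Either route is short; I would present the one via Theorem \ref{teodualizacao} for maximal conceptual clarity.
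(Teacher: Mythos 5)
Your proposal is correct, but your primary route differs from the paper's. The paper does not invoke Theorem \ref{teodualizacao} at all for this corollary: it proves the statement by directly verifying the two conditions of Proposition \ref{rho_hparcial} for the element $\lambda \in H^*$, namely $\varepsilon_{H^*}(\lambda) = \lambda(1_H) = 1_\Bbbk$ and $(\lambda \otimes 1_{H^*})\Delta_{H^*}(\lambda) = \lambda \otimes \lambda$, using the characterization of a partial $H$-module coalgebra via $\lambda$ recalled just before the statement ($\lambda(1_H)=1_\Bbbk$ and $\lambda(h)\lambda(k)=\lambda(h_1)\lambda(h_2k)$) together with the defining relation $\lambda(hk)=\lambda_1(h)\lambda_2(k)$ for $\Delta_{H^*}$. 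This is exactly the ``alternative'' you sketch in your last sentences, so you have in fact identified the paper's argument as a secondary option. Your primary route --- specializing Theorem \ref{teodualizacao} to $H^*$, using $(H^*)^* \cong H$, and tracing the explicit correspondence $\rho \mapsto (c \leftharpoonup h = c^{-1}(h)c^0)$ and its inverse $c \mapsto \sum_i h_i^* \otimes (c \leftharpoonup h_i) = \lambda \otimes c$ --- is also valid and arguably more conceptual, since it exhibits the corollary as a literal instance of the dualization theorem rather than a fresh computation; the cost is the extra bookkeeping with the canonical isomorphism and the dual basis. The paper's direct check is shorter and self-contained, reducing everything to two one-line identities about $\lambda$. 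Either argument is complete; there is no gap in yours.
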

\begin{proof}
It is enough to show that
	
	\begin{itemize}
		\item[(i)] $\varepsilon_{H^*}(\lambda) = 1_{\Bbbk}$, since $\varepsilon_{H^*}(\lambda) =\lambda(1_H).
			=1_{\Bbbk}$

		\item[(ii)] $(\lambda \otimes 1_{H^*})\Delta(\lambda)= \lambda \otimes \lambda$, since for all $h, k \in H$
				\begin{eqnarray*}
			(\lambda \otimes 1_{H^*})\Delta(\lambda)(h \otimes k)&=& (\lambda \lambda_1)(h) (\lambda_2)(k)\\
			&{=}& \lambda(h_1) \lambda_1(h_2) \lambda_2(k)\\
			&{=}& \lambda(h_1) \lambda(h_2k)\\
			&{=}& \lambda(h) \lambda(k)\\
			&=&(\lambda \otimes \lambda)(h \otimes k).
		\end{eqnarray*}
	\end{itemize}
The converse is immediate.	
\end{proof}

\end{document}